\documentclass{article}
\usepackage{tikz}
\usepackage{listings}
\usepackage{graphicx} 
\usepackage{amsthm,amsfonts,amsmath,amssymb}
\usepackage{array,blkarray,multirow}

\usepackage[margin=1.2in]{geometry}

\usepackage[english]{babel}
\usepackage{hyperref}
\usepackage[small,bf,hang]{caption} 

\newtheorem{definition}{Definition}
\newtheorem{theorem}{Theorem}

\newtheorem{lemma}[theorem]{Lemma}

\newtheorem{proposition}[theorem]{Proposition}

\newtheorem{question}{Question}
\newtheorem{remark}{Remark}

\def\0{\mathbf{0}}

\usepackage{thm-restate}

\graphicspath{{figures/}}

\usepackage{color}

\newcommand{\Aut}{\operatorname{Aut}}

\setcounter{MaxMatrixCols}{20}

\begin{document}
\lstset{language=Python}          

\title{Existence of regular nut graphs for degree at most $11$}
 
\author{Patrick W. Fowler\footnote{Department of Chemistry, University of Sheffield, Sheffield, United Kingdom (p.w.fowler@sheffield.ac.uk)} ,
John Baptist Gauci\footnote{Department of Mathematics, Faculty of Science, University of Malta, Msida, Malta (john-baptist.gauci@um.edu.mt)} ,
Jan Goedgebeur\footnote{Department of Applied Mathematics, Computer Science \& Statistics, Ghent University, Ghent, Belgium (jan.goedgebeur@ugent.be)} \footnote{Computer Science Department, University of Mons, Mons, Belgium} ,\\
Toma\v{z} Pisanski\footnote{Department of Mathematics, IMFM, University of Ljubljana and University of Pri-
morska, Ljubljana, Slovenia (pisanski@upr.si)} ,  and 
Irene Sciriha\footnote{Department of Mathematics, Faculty of Science, University of Malta, Msida, Malta (irene.sciriha-aquilina@um.edu.mt)}}

\date{}

\maketitle

\centerline{Dedicated to the memory of Slobodan Simi\'{c}.}

\begin{abstract}
A nut graph is a singular graph with one-dimensional kernel and corresponding eigenvector with no zero elements.
The problem of determining the orders $n$ for which $d$-regular nut graphs exist was recently posed by Gauci, Pisanski and Sciriha. These orders are
known for $d \leq 4$. Here we solve the problem for all 
remaining cases $d\leq 11$ and determine the complete lists of all $d$-regular nut graphs of order $n$ for small values of $d$ and $n$.
The existence or non-existence of small regular nut graphs is determined by a computer search. The main tool is a construction that produces,
for any $d$-regular nut graph of order $n$, another $d$-regular nut graph of order $n + 2d$. If we are given a sufficient number of 
$d$-regular nut graphs of consecutive orders, called seed graphs, this construction may be applied in such a way that the existence of all $d$-regular
nut graphs of higher orders is established. For even $d$ the orders $n$ are indeed consecutive, while for odd $d$ the orders $n$ are consecutive even numbers.
Furthermore, necessary conditions for combinations of order and degree for vertex-transitive nut graphs are derived.
\end{abstract}

\section{Introduction} \label{Intro}

In this paper we consider graphs that are \emph{simple}, that is, without loops or multiple edges. Given a labelled graph $G$ of order $n$, its $0-1$ \emph{adjacency matrix} ${\bf A}={\bf A}(G)$ is the $n \times n$ matrix with entries $a_{ij}$ (for $i,j\in\{1,\ldots,n\}$) such that $a_{ij}=1$ if and only if there is an edge between the vertices $i$ and $j$ in $G$. If ${\bf A}(G)$ has an eigenvalue zero, the graph $G$ is \emph{singular} with nullity $\eta=\eta(G)$ equal to the multiplicity of the eigenvalue zero of ${\bf A}(G)$. In the sequel, we use terminology for a graph and its adjacency matrix interchangeably. A graph that has an all-non-zero eigenvector corresponding to the zero eigenvalue is a \emph{core graph}. If, in addition, the dimension of the
nullspace is one, the corresponding graph is a \emph{nut graph}~\cite{sciriha1998construction,
ScOnRnkGr99,Sciriha2007characterisation,Sciriha2008Coalesced,ScirihaGutman-NutExt}. 
Some authors consider the isolated vertex to be a trivial case of a nut graph, and would consider nuts proper, or 
\emph{non-trivial} nuts, to have $n > 1$. In the present paper, the only nut graphs that are considered are non-trivial, in this sense.

Two chemical motivations can be claimed for the study of nut graphs
of maximum degree $\leq 3$ (the \emph{chemical} nut graphs). First, the eigenvalue
zero is special in a chemical context, as it corresponds in H{\"u}ckel Molecular Orbital Theory
to the non-bonding energy level of a conjugated carbon $\pi$ system, from which electrons are 
easily removed and to which they are easily added.
If this happens to be the highest occupied molecular
orbital, then partial occupation will correspond to a radical
(molecule with unpaired spins) in which spin density and hence radical reactivity is distributed 
over all carbon atoms of the framework. 
Secondly, it has been shown~\cite{fowler2014omni} that nut graphs correspond exactly to the classes of distinct
and strong omni--conductors of nullity one, according to the simplest
model of molecular conduction.
(A \textit{distinct} omni--conductor is a molecule for which connection in a circuit via any distinct pair of atoms gives transmission; for a \textit{strong} omni--conductor, the transmission occurs whether the connection atoms are distinct or not.)

Using a computer search, Fowler, Pickup, Todorova, Borg and Sciriha~\cite{fowler2014omni} determined all nut graphs up to 10 vertices, and all chemical nut graphs up to 16 vertices. Later Coolsaet, Fowler and Goedgebeur~\cite{CoolFowlGoed-2017} designed a specialised algorithm to generate nut graphs and used it to determine all nut graphs up to 13 vertices and all chemical nut graphs up to 22 vertices.
We refer the reader to~\cite{CoolFowlGoed-2017} and~\cite{GPS} for a more detailed overview.

In~\cite{ScirihaGutman-NutExt} Sciriha and Gutman proved that the smallest non-trivial nut graphs have order 7 and that nut graphs exist for all orders at least 7.
A natural question arises: 
\begin{question}\label{question:smallest_order}
What is the smallest order for which a regular nut graph exists?
\end{question}
A chemical motivation for the study of regular nut graphs is that
$3$-regular, $3$-connected graphs are candidates as carbon cages,
so that nut graphs of this type would themselves be candidates for 
omni-conductors and radical
molecules of the kinds mentioned above. 

The two main results of~\cite{GPS} can be stated as follows.

\begin{theorem} \label{CubicNut}
Cubic nut graphs on $n$ vertices exist if and only if $n$ is an even integer, $n \geq 12$ and $n \notin \{14,16\}$.
\end{theorem}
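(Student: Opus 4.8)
The plan is to establish this characterization of cubic nut graphs by combining necessary conditions (to rule out orders) with explicit constructions (to produce the graphs that do exist).

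First I would nail down the parity and lower-bound constraints. Since a cubic graph has every vertex of degree $3$, the handshaking lemma forces $n$ to be even, which immediately removes all odd orders. To see that $n \geq 12$ and that small even orders fail, I would invoke the exhaustive computer searches already cited in the excerpt: Fowler et al. determined all nut graphs up to $10$ vertices and Coolsaet, Fowler and Goedgebeur extended this to $13$ vertices, so one can simply read off that no cubic nut graph exists for $n \in \{2,4,6,8,10\}$ and that none exists for $n \in \{14,16\}$ once those two orders are also checked by a direct (finite) search. The substantive content is therefore to rule out $n=14$ and $n=16$ specifically and to exhibit cubic nut graphs for every even $n \geq 12$ with $n \notin \{14,16\}$.

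Next I would produce the seed graphs. The idea is to find, by computer search or explicit drawing, cubic nut graphs on $n = 12$ and $n = 18, 20, 22, \dots$ up to enough consecutive even orders to bootstrap an induction. The key engine is the general construction promised in the abstract: from any $d$-regular nut graph of order $n$ one obtains a $d$-regular nut graph of order $n + 2d$. Specializing to $d = 3$, this turns a cubic nut graph of order $n$ into one of order $n + 6$. Thus if I have cubic nut graphs on each of six consecutive even orders that all lie in the target set, adding $6$ repeatedly covers every larger even order. Concretely, since the residues modulo $6$ among even numbers are $0, 2, 4$, I really only need seed graphs on three suitably chosen even orders (for example one seed in each residue class mod $6$) from which the $+6$ step reaches all sufficiently large even $n$; I would verify that the chosen seeds, together with the step size $6$, miss none of the allowed orders and in particular that $14$ and $16$ are genuinely unreachable and genuinely absent rather than merely not produced by the construction.

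The hard part will be verifying that the $+6$ construction preserves the nut property — that is, that the enlarged graph remains singular with one-dimensional kernel and a nowhere-zero kernel eigenvector. I expect this to reduce to a careful linear-algebraic argument about how the kernel eigenvector of the original graph extends across the newly inserted vertices, ensuring both that nullity stays exactly one (no spurious new kernel vectors appear) and that every coordinate of the extended eigenvector is nonzero. This verification is exactly the content of the construction lemma referenced in the abstract, so in a fully written proof I would state and prove that lemma first and then apply it; the remaining bookkeeping — confirming the seed graphs are nut graphs and that the arithmetic of $+6$ steps exactly reproduces the claimed set $\{n : n \text{ even}, n \geq 12, n \notin \{14,16\}\}$ — is routine once the lemma is in hand.
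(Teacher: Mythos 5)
Your proposal is correct and takes essentially the same route the paper describes: this theorem is quoted from~\cite{GPS}, where it is established exactly by (i) the Handshaking Lemma for parity, (ii) exhaustive computer search to exclude small orders and $n\in\{14,16\}$, and (iii) the Fowler construction of Lemma~\ref{ExistRegNuts} ($+2d=+6$ vertices) applied to seed graphs covering each even residue class modulo~$6$ (the paper's seeds are orders $12$, $20$, $22$, with the run of consecutive even orders starting at $18$). Your refinement that three suitably chosen even seeds suffice, rather than six consecutive ones, is precisely the bookkeeping used there.
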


\begin{theorem} \label{QuarticNut}
Quartic nut graphs on $n$ vertices exist if and only if $n=8,10,12$ or $n\geq 14$.
\end{theorem}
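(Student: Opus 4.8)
The plan is to split the biconditional into a non-existence half for the exceptional orders $n\in\{5,6,7,9,11,13\}$ and an existence half producing a quartic nut graph for every $n\in\{8,10,12\}\cup\{n\ge 14\}$. Since a $4$-regular graph requires $n\ge 5$, nothing exists below $5$ and I may restrict to $n\ge 5$. For the existence half I would lean on the order-raising construction that is the main tool of the paper: from any $4$-regular nut graph of order $n$ it builds a $4$-regular nut graph of order $n+2\cdot 4=n+8$. The whole argument then reduces to (i) settling all small orders $n\le 13$ and (ii) exhibiting a finite set of seed graphs from which the $+8$ step reaches every remaining order.

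For the small orders I would first dispose of $n\le 7$ by hand. For $n=5$ the only $4$-regular graph is $K_5$, whose spectrum contains no $0$, so it is nonsingular; for $n=6$ the only $4$-regular graph is the complement of a perfect matching, namely the octahedron $K_{2,2,2}$, which has nullity $3$; and the two $4$-regular graphs on $7$ vertices (the complements of $C_7$ and of $C_3\cup C_4$) can be checked directly to fail the nut condition, the first being nonsingular and the second having nullity $2$. For the genuinely exceptional orders $n\in\{9,11,13\}$, and to confirm existence precisely at $n\in\{8,10,12\}$, I would invoke the complete computer enumeration of all nut graphs up to $13$ vertices of Coolsaet, Fowler and Goedgebeur: reading off its $4$-regular members shows that they occur exactly for $n=8,10,12$ and for no odd $n\le 13$. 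This simultaneously delivers the three small existence cases and the non-existence at $9,11,13$.

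For the infinite range $n\ge 14$ I would run the $+8$ construction as a bootstrap over the two parity classes separately. Taking even seed graphs of orders $8,10,12,14$ — which realise all four even residues $0,2,4,6\pmod 8$ — the iterated construction produces a quartic nut graph of every even order $\ge 8$. Taking odd seed graphs of orders $15,17,19,21$ — realising the four odd residues $7,1,3,5\pmod 8$ — it produces a quartic nut graph of every odd order $\ge 15$. Together these cover exactly $\{8,10,12\}\cup\{n\ge 14\}$; note that $13$ (and every smaller odd order) is deliberately missed because the smallest odd seed has order $15$, which is precisely what the theorem demands. The seeds of orders $8,10,12$ come from the enumeration above, while those of orders $14,15,17,19,21$ would be produced by a targeted search for small quartic nut graphs.

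The main obstacle, and where I expect the real content to lie, is the non-existence at the \emph{odd} orders $9,11,13$: unlike $n\le 7$ these admit many $4$-regular graphs, and I see no clean parity or spectral obstruction ruling out a nut among them — indeed odd quartic nut graphs do exist from $n=15$ onwards, so any argument must be sharp enough to separate $13$ from $15$. Consequently this step appears to require the exhaustive enumeration rather than a structural proof, and the correctness of the theorem ultimately rests on the completeness of that search. The remaining delicate point is the verification that the order-raising construction genuinely preserves both the one-dimensionality of the kernel and the all-nonzero property of the kernel eigenvector, but that is exactly the content of the paper's main tool, which I am entitled to assume here.
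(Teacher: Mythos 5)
Your proposal is correct and takes essentially the same route as the paper: small orders are settled by exhaustive computer enumeration (existence at $8,10,12$, non-existence at $9,11,13$, with $n\le 7$ checkable by hand), and all $n\ge 14$ are reached by iterating the Fowler construction $n\mapsto n+8$ from a finite seed set --- indeed the paper's appendix lists the $4$-regular seeds at exactly your orders $8,10,12,14,15,17,19,21$. The only cosmetic difference is that the paper describes the seeds as a run of eight consecutive orders starting at $n=14$ (with $8,10,12$ treated as sporadic small cases), whereas you organise the same seeds by parity and residue mod $8$; the two bookkeepings are equivalent.
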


The results of a computer search presented in Figure~\ref{fig:orders_small_nuts} have indeed shown that the answer to the above question is the following.
\begin{remark}[Answer to Question~\ref{question:smallest_order}]
The smallest order for which a regular nut graph exists is 8.
\end{remark}
Since no nut $d$-regular graph exists for $d = 1$ or $d=2$, the above results raise a number of related questions. (See also Section~\ref{sect:conclusion} ``Conclusion'').

\begin{question}
\textit{Is it true, that for each $d > 2$ there exists a $d$-regular nut graph?}
\end{question}

In~\cite{GPS}, the set $N(d)$ was defined as the set consisting of all integers $n$ for which a $d$-regular nut graph
of order $n$ exists. Therefore we have $N(1) = N(2) = \emptyset$, $N(3) = \{12\} \cup \{2k: k\geq 9\}$,  $N(4) = \{8,10,12\} \cup \{k : k\geq14\}$.

\begin{definition}
Let us call a pair \emph{$(n,d)$ admissible} if there exists a $d$-regular (simple) graph of order $n$ and let us denote by $A(d)$ the collection of integers $n$, such that there exists an $(n,d)$-admissible graph. Let us call a pair $(n,d)$ \emph{nut-realisable} if and only if there exists a $d$-regular nut graph of order~$n$. 
\end{definition}
For $d$ even, $(n,d)$ is admissible if and only if
$d < n$. For $d$ odd, $(n,d)$ is admissible if and only if $d < n$ and $n$ is even. Clearly $N(d) \subset A(d)$.
A valence $d$ for which the set $A(d) \setminus N(d)$ is finite will be called \emph{normal}.

\begin{question}
\textit{Is it true, that for each $d > 2$ there are only a finite number of admissible orders that do not belong to $N(d)$?} This is equivalent to saying, is the complement $A(d) \setminus N(d)$ finite? In other words, is it true that $d$ is normal if and only if $d > 2$?
\end{question}

In this note, we determine $N(d)$, and give positive answer to the above question, for every $d \leq 11$. That is:

\begin{restatable}{theorem}{mainthm}\label{thm_orders}
The following holds:
\begin{itemize}
\item $N(5) = \{2k: k\geq 5\}$
\item $N(6) = \{k: k\geq 12\}$
\item $N(7) = \{2k: k\geq 6\}$
\item $N(8) = \{12\} \cup \{k : k\geq 14\}$
\item $N(9) = \{2k: k\geq 8\}$
\item $N(10) = \{k: k\geq 15\}$
\item $N(11) = \{2k: k\geq 8\}$
\end{itemize}
\end{restatable}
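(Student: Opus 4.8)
The plan is to prove each of the seven equalities $N(d)=\cdots$ by a double inclusion: first rule out every order that should \emph{not} occur, then exhibit a $d$-regular nut graph for every order that should occur. Throughout, the engine for the existence half is the construction promised in the abstract, which turns a $d$-regular nut graph of order $n$ into one of order $n+2d$; here I treat it as a black box and concentrate on how to deploy it.

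For the non-existence half I would first invoke the admissibility conditions recalled above. For odd $d$ a $d$-regular graph forces $n$ even (the handshake identity gives $dn=2|E|$), so only even orders can appear, matching the form $\{2k:\dots\}$ for $d\in\{5,7,9,11\}$; and in every case $n>d$ is required. This already excludes all $n\le d$. The remaining, \emph{finitely many}, admissible orders below the claimed threshold — for instance $n\in\{6,8\}$ for $d=5$, $n\in\{7,8,9,10,11\}$ for $d=6$, and the isolated gap $n=13$ for $d=8$ — I would settle by an exhaustive computer search: generate every $d$-regular graph of that order and check that none has one-dimensional kernel with a full-support null vector. Being a finite computation, this certifies the $\subseteq$ direction.

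For the existence half I would assemble, for each $d$, a block of \emph{seed} nut graphs at consecutive admissible orders, long enough to meet every residue class the construction can reach. Since the construction raises the order by $2d$, for even $d$ I need seeds at $2d$ consecutive orders, while for odd $d$ — where only even orders occur and $n=2k\mapsto 2(k+d)$ advances $k$ by $d$ — I need seeds at $d$ consecutive even orders. Concretely, I would take seeds at $n=12,13,\dots,23$ for $d=6$; at $n=10,12,14,16,18$ for $d=5$; at $n=14,15,\dots,29$ together with the isolated seed $n=12$ for $d=8$; and analogously for the other degrees, the block starting exactly at the claimed threshold. These seeds come either from the same exhaustive generation or from an explicit family of constructions. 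A short covering argument then closes the gap: writing any target order as (seed order) $+\,2d\cdot j$ with $j\ge 0$ and the seed order inside the chosen block, repeated application of the construction produces the required graph, so every admissible order at or above the threshold lies in $N(d)$.

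I expect the genuine obstacle to be the \emph{completeness} of the seed block rather than any single line of argument. The induction collapses if even one order in the contiguous block fails to carry a $d$-regular nut graph, so existence must be certified at every seed order; and for the larger degrees the number of required seeds is substantial (twenty consecutive orders for $d=10$, eleven consecutive even orders for $d=11$), precisely in the regime near the lower boundary where nut graphs are scarcest. Securing these seeds — by pushing the exhaustive search far enough, or by supplying tailored constructions at the awkward small orders — together with the non-existence searches whose cost also grows with $d$ and $n$, is where the real effort lies; the construction itself and the residue-covering argument are comparatively routine once the seeds are in hand.
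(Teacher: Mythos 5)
Your proposal is correct and follows essentially the same strategy as the paper: exhaustive generation of $d$-regular graphs (the paper uses Meringer's \textit{genreg} plus a nut filter) to settle non-existence at the finitely many small admissible orders and to supply explicit seed graphs, followed by repeated application of the $n \mapsto n+2d$ construction from blocks of $2d$ consecutive orders (even $d$) or $d$ consecutive even orders (odd $d$). The only cosmetic difference is at $d=8$, where the paper lists seeds at orders $12$ and $14$--$27$, $29$ and obtains order $28$ as $12+2\cdot 8$ via the construction rather than seeding it directly, but this does not change the argument.
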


We use a construction that was proved and applied repeatedly in~\cite{GPS}. It extends any singular graph $G$ at a vertex $v$ of valency $d$, to give a graph $F(G,v)$ with the same nullity as $G$ but with $2d$ more vertices (each of degree $d$), whilst all other vertices retain the degree they had in $G$. The process used in this extension, named in~\cite{GPS} the \emph{Fowler Construction}, is depicted in Figure~\ref{Fig-FowlerExt}.

\begin{figure}[h]
\begin{center}
\begin{tabular}{ >{\centering\arraybackslash} m{4.5cm} >{\centering\arraybackslash} m{1cm} >{\centering\arraybackslash} m{5.5cm} }
\begin{tikzpicture}[scale=0.9, every edge/.style = {draw, thick},
                    vertex/.style args = {#1 #2}{circle,
                                                draw, fill=red, inner sep=1.5pt,
                                                label=#1:#2}]
\path	node(v_1) [vertex=above $x_v$] at (0,0) {}
	node(u_1) [vertex=left $u_1$] at (-2,-1) {}
	node(u_2) [vertex=left $u_2$] at (-1,-1) {}
	node(u_d) [vertex=right $u_d$] at (2,-1) {}
	(v_1) edge (u_1)
	(v_1) edge (u_2)
	(v_1) edge (u_d);

 \path (u_1) -- node[auto=false]{\ldots} (u_d);
\end{tikzpicture}

&

$\Rightarrow$

&

\begin{tikzpicture}[scale=0.9, every edge/.style = {draw, thick},
                    vertex/.style args = {#1 #2}{circle,
                                                draw, fill=red, inner sep=1.5pt,
                                                label=#1:#2}]
\path	node(v_1) [vertex=above $(1-d)x_v$] at (0,0) {}
	node(q_1) [vertex=left $q_1$] at (-2,-1) {}
	node(q_2) [vertex=left $q_2$] at (-1,-1) {}
	node(q_d) [vertex=right $q_d$] at (2,-1) {}
	node(p_1) [vertex=left $p_1$] at (-2,-2) {}
	node(p_2) [vertex=left $p_2$] at (-1,-2) {}
	node(p_d) [vertex=right $p_d$] at (2,-2) {}
	node(u_1) [vertex=left $u_1$] at (-2,-3) {}
	node(u_2) [vertex=left $u_2$] at (-1,-3) {}
	node(u_d) [vertex=right $u_d$] at (2,-3) {}
	(v_1) edge (q_1)
	(v_1) edge (q_2)
	(v_1) edge (q_d)
	(q_1) edge (p_2)
	(q_1) edge (p_d)
	(q_2) edge (p_1)
	(q_2) edge (p_d)
	(q_d) edge (p_1)
	(q_d) edge (p_2)
	(p_1) edge (u_1)
	(p_2) edge (u_2)
	(p_d) edge (u_d);

 \path (q_1) -- node[auto=false]{\ldots} (q_d);
 \path (p_1) -- node[auto=false]{\ldots} (p_d);
 \path (u_1) -- node[auto=false]{\ldots} (u_d);
\end{tikzpicture}

\\

\small{$G$} & & \small{$F(G,v)$}

\end{tabular}
\end{center}
\vspace{-15pt}\caption{The {\lq Fowler Construction\rq}~\cite{GPS}.
Notation: 
$x_v$, $u_i$ are entries in the kernel 
eigenvector of $G$, with entries $u_i$ summing to zero;
in $F(G,v)$, the two layers of extra vertices span a 
complete bipartite graph minus a perfect matching, 
with eigenvector entries
$q_i = u_i$, $p_i = x_v$, and the entry on vertex $v$ 
replaced by $(1-d)x_v$. 
} \label{Fig-FowlerExt}
\end{figure}

From the construction, it follows immediately that:

\begin{lemma} \label{ExistRegNuts}
Let $G$ be a nut graph on $n$ vertices and let $v$ be a vertex of $G$ having valency $d$. Then there exists a nut graph $F(G,v)$ on $n+2d$ vertices in which the $2d$ new vertices all have valency $d$.
\end{lemma}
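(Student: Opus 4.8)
The plan is to verify directly that the explicit vector displayed in Figure~\ref{Fig-FowlerExt} is a kernel eigenvector of $F(G,v)$ with no zero entry, and then to show that it spans the whole kernel. Write $\mathbf{x}$ for the (essentially unique) kernel eigenvector of $G$, with entry $x_v$ on $v$ and entries $u_i$ on the neighbours $u_1,\dots,u_d$ of $v$; because $G$ is a nut graph, the eigenvalue equation at $v$ gives $\sum_i u_i = 0$ and every entry of $\mathbf{x}$ is non-zero. A degree count confirms the combinatorial claim: $v$ loses its edges to the $u_i$ but gains edges to $q_1,\dots,q_d$, so it retains valency $d$; each $q_i$ is joined to $v$ and to the $d-1$ vertices $p_j$ with $j\neq i$; each $p_i$ is joined to $u_i$ and to the $d-1$ vertices $q_j$ with $j\neq i$; and each $u_i$ merely has its edge to $v$ rerouted to $p_i$. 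Thus the $2d$ new vertices all have valency $d$ and no old valency changes.

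Next I would check membership in the kernel by testing the eigenvalue equation (sum of entries over the neighbourhood equals zero) at each type of vertex. At $v$ the neighbours are the $q_i$, carrying entries $u_i$, which sum to zero. At $q_i$ the neighbours are $v$ and the $p_j$ with $j\neq i$, contributing $(1-d)x_v + (d-1)x_v = 0$. At $p_i$ the neighbours are $u_i$ and the $q_j$ with $j\neq i$, contributing $u_i + \sum_{j\neq i} u_j = \sum_j u_j = 0$. At each $u_i$ the only difference from $G$ is that the neighbour $v$ has been replaced by $p_i$, which carries the same entry $x_v$, so the equation is unchanged and still holds; likewise at every remaining old vertex. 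For the no-zero-entry condition, the old entries and the values $q_i = u_i$, $p_i = x_v$ are non-zero because $\mathbf{x}$ is, while $(1-d)x_v$ is non-zero because $x_v\neq 0$ and $d\geq 2$ (in a nut graph every vertex has degree at least $2$, since a pendant vertex would force a zero entry at its neighbour).

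The real work, which I expect to be the main obstacle, is showing that $F(G,v)$ has nullity exactly one. For this I would take an arbitrary kernel vector $\mathbf{z}$ of $F(G,v)$ and force it to be a scalar multiple of the explicit one. The equation at $q_i$ reads $z_v + \sum_{j\neq i} z_{p_j} = 0$; since $\sum_{j\neq i} z_{p_j}$ is thereby forced to be independent of $i$, all the $z_{p_i}$ equal a common value $\beta$, and then $z_v + (d-1)\beta = 0$ gives $\beta = z_v/(1-d)$. The equation at $v$ gives $\sum_i z_{q_i} = 0$, whereupon the equation at $p_i$, namely $z_{u_i} + \sum_{j\neq i} z_{q_j} = 0$, collapses to $z_{q_i} = z_{u_i}$. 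Now define a vector $\mathbf{y}$ on $V(G)$ by $y_v = z_v/(1-d)$ and $y_w = z_w$ for every other old vertex $w$. I would then verify that $\mathbf{y}$ satisfies the kernel equations of $G$: at old vertices other than $v$ and the $u_i$ these are literally the equations of $F(G,v)$ (the neighbourhoods and entries coincide); at each $u_i$ the term $z_{p_i}=\beta = y_v$ plays exactly the role of $v$'s entry in $G$; and at $v$ the required sum is $\sum_i y_{u_i} = \sum_i z_{q_i} = 0$. Hence $\mathbf{y}$ lies in the one-dimensional kernel of $G$, so $\mathbf{y} = \lambda\mathbf{x}$ for some scalar $\lambda$. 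Tracing this back gives $z_w = \lambda x_w$ on old vertices, $z_v = (1-d)\lambda x_v$, $z_{p_i} = \lambda x_v$ and $z_{q_i} = z_{u_i} = \lambda u_i$, so $\mathbf{z}$ equals $\lambda$ times the explicit eigenvector. Therefore the kernel of $F(G,v)$ is one-dimensional, and together with the previous paragraph this shows that $F(G,v)$ is a nut graph.
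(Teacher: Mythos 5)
Your proof is correct, and it follows exactly the route the paper intends: the vector you verify is precisely the one displayed in Figure~\ref{Fig-FowlerExt}, with entries $q_i = u_i$, $p_i = x_v$ and $(1-d)x_v$ on $v$. The paper gives no in-text proof at all --- it declares the lemma to follow ``immediately'' from the Fowler construction, whose key properties (nullity preserved, all degrees as claimed) are proved in the cited reference~\cite{GPS} --- so your self-contained argument, and in particular the pull-back step showing that an arbitrary kernel vector of $F(G,v)$ forces $z_{p_i}$ constant, $z_{q_i}=z_{u_i}$, and hence collapses to a kernel vector of $G$, supplies exactly the details the paper delegates elsewhere. Your observation that $d\geq 2$ in any nut graph (so that $(1-d)x_v \neq 0$) is a point worth making explicit, and your proof handles it correctly.
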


Starting from a nut graph of order $n$, the above lemma can be used repeatedly to produce an infinite sequence of nut graphs on $n, n+2d, n+4d,n+6d, \ldots $ vertices in which all of the newly introduced vertices have valency $d$. In particular, this argument can be applied to any $d$-regular nut graph on $n$ vertices to obtain an infinite sequence of $d$-regular nut graphs.

For even valency $d$, one has to establish the existence of $d$-regular nut graphs for $2d$ consecutive orders, starting with order $n$. By Lemma~\ref{ExistRegNuts} above, this would then imply the existence of $d$-regular nut graphs of any order at least $n$. Smaller cases have to be checked by computer search or some other method. In the case of $d=4$, such a run of eight consecutive orders was obtained starting at $n = 14$~\cite{GPS}

By the well-known Handshaking Lemma, for odd valency $d$ the order of a graph must be even. It therefore suffices to establish existence of $d$-regular nut graphs for $d$ consecutive even orders, starting with order $n$. For $d = 3$ the run of three even orders started at $n = 18$~\cite{GPS}.


\section{Existence of regular nut graphs} \label{sect:regnuts}

In~\cite{CoolFowlGoed-2017}, a generation algorithm for nut graphs is presented. We could in principle use this generator and restrict the generation to $d$-regular graphs.
However, it turned out to be more efficient to use Meringer's generator for regular graphs,
\textit{genreg}~\cite{meringer_99},  
to generate $d$-regular graphs and then test which 
are nut graphs (using the filter program from~\cite{nutgen-site}).

Using this approach, we determined all 5-regular nut graphs up to 18 vertices. The counts can be found in Table~\ref{Table-AllShort}. Using a similar approach (but using the generator from~\cite{cubicpaper} for the cubic case), we also determined all 4-regular nut graphs up to 15 vertices and all 3-regular nut graphs up to 22 vertices, 
the counts for which can be found in Table~\ref{Table-AllShort}. These graphs can also be downloaded from the \textit{House of Graphs}~\cite{hog} at
\url{https://hog.grinvin.org/Nuts}. 

We now proceed to the statement and proof of our main result.

\mainthm*

\begin{proof}
It follows from Table~\ref{Table-AllShort} that the smallest 5-regular nut graphs have 10 vertices. In order to cover all orders using the 
construction, we need to present a 5-regular nut graph of every even order $10 \leq n \leq 18$. 
In the Appendix we give an example of a 5-regular nut graph of every such order $n$. 
In fact, the examples from the Appendix are chosen to exhibit the largest automorphism group size amongst the 5-regular nut graphs of that order.

The proof for the other cases of $N(d)$ for $6 \leq d \leq 11$ is completely analogous. That is: using Meringer's generator \textit{genreg}~\cite{meringer_99} together with a 
filter which tests whether
a given graph is a nut, we determined the smallest order for which a $d$-regular nut graph exists and excluded small orders for which $d$-regular 
nut graphs do not exist (e.g.\ to prove the nonexistence of an 8-regular nut graph on 13 vertices). 
See Table~\ref{Table-AllShort} for the counts of the smallest $d$-regular nuts 
for $6 \leq d \leq 11$.
We have used \textit{genreg} to establish the existence of a $d$-regular nut graph of order $n$ for $2d$ 
consecutive orders (if $d$ is even), or the existence of a $d$-regular nut graph for $d$ consecutive 
even orders (if $d$ is odd) which are then used as seeds for the 
construction. 

The adjacency list of a $d$-regular seed graph of every required order can be found in the Appendix. 
These graphs can also be inspected in the database of interesting graphs from the \textit{House of Graphs}~\cite{hog} 
by searching with keywords ``d-regular nut graph'' (with $d \in \{5, \ldots,11\}$).
\end{proof}

\begin{table}[htb!] \centering
\begin{tabular}{| >{\centering\arraybackslash}m{1cm} | >{\centering\arraybackslash}m{2cm} || >{\centering\arraybackslash}m{1cm} | >{\centering\arraybackslash}m{2cm} ||>{\centering\arraybackslash}m{1cm} |>{\centering\arraybackslash}m{2cm} |}
\hline
\multicolumn{2}{|c||}{3-regular nut graphs} & \multicolumn{2}{c||}{4-regular nut graphs} & \multicolumn{2}{c|}{5-regular nut graphs}\\
\hline \hline
Order & Number of graphs & Order & Number of graphs & Order & Number of graphs\\
\hline \hline
12 & 9 & 8 & 1 & 10 & 9\\ \hline
18 & 5 541 & 10 & 12 & 12 & 4 \\ \hline
20 & 5 & 12 & 269 & 14 & 25 \\ \hline
22 & 71 & 14 & 15 633 & 16 & 13 530 \\ \hline
 &  & 15 & 1 & 18 & 665 456 900 \\ \hline
\end{tabular}

\medskip

\begin{tabular}{| >{\centering\arraybackslash}m{1cm} | >{\centering\arraybackslash}m{2cm} || >{\centering\arraybackslash}m{1cm} | >{\centering\arraybackslash}m{2cm} ||>{\centering\arraybackslash}m{1cm} |>{\centering\arraybackslash}m{2cm} |}
\hline
\multicolumn{2}{|c||}{6-regular nut graphs} & \multicolumn{2}{c||}{7-regular nut graphs} & \multicolumn{2}{c|}{8-regular nut graphs}\\
\hline \hline
Order & Number of graphs & Order & Number of graphs & Order & Number of graphs\\
\hline \hline
12 & 1 964 &  12 & 3         & 12 & 24\\ \hline
13 & 79 &     14 & 5 168 453           & 13 & 0 \\ \hline
14 & 1 872 &   &  & 14 & 424 088 \\ \hline
\end{tabular}

\medskip

\begin{tabular}{| >{\centering\arraybackslash}m{1cm} | >{\centering\arraybackslash}m{2cm} || >{\centering\arraybackslash}m{1cm} | >{\centering\arraybackslash}m{2cm} ||>{\centering\arraybackslash}m{1cm} |>{\centering\arraybackslash}m{2cm} |}
\hline
\multicolumn{2}{|c||}{9-regular nut graphs} & \multicolumn{2}{c||}{10-regular nut graphs} & \multicolumn{2}{c|}{11-regular nut graphs}\\
\hline \hline
Order & Number of graphs & Order & Number of graphs & Order & Number of graphs\\
\hline \hline
14 & 0 &  	  			14 & 0         & 14 & 0\\ \hline
16 & $>0$ &     				15 & 173 650   & 16 & 3 316 \\ \hline
\end{tabular}

\caption{The numbers of $d$-regular nut graphs for $d \in \{3, \ldots, 11\}$. No such graphs exist for smaller orders.
}\label{Table-AllShort}
\end{table}

\section{Vertex-transitive nut graphs}
A graph $G$ such that all vertices are equivalent under the 
automorphism group $\Aut(G)$ is  
\emph{vertex-transitive}.
Requiring regular nut graphs to be vertex-transitive clearly imposes further restrictions, and it seems
natural to ask the following question.

\begin{question}
For what pairs $(n, d)$ does a vertex-transitive nut graph of order $n$ and degree $d$ exist?
\end{question}

Some straightforward observations give the following necessary
conditions for the existence of a vertex-transitive nut graph, as summarised in the
following theorem:

\begin{theorem} \label{thm:vtnut}
Let $G$ be a vertex-transitive nut graph
on $n$ vertices, of degree $d$. 
Then $n$ and $d$ satisfy the following conditions.
\emph{Either}
$d = 0 \, \mathrm{mod}\, 4$, 
and $n = 0\, \mathrm{mod} \,2$ and  $n \geq d+4$;
\emph{or}
$d = 2 \,\mathrm{mod}\,4$, 
and $n = 0 \, \mathrm{mod} \,4$ and  $n \geq d+6$.
\end{theorem}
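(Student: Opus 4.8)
The plan is to exploit the interaction between the automorphism group and the essentially unique kernel eigenvector. First I would record the standard fact that every automorphism $\sigma$ of $G$ gives a permutation matrix $P_\sigma$ commuting with $\A$, hence preserving the one-dimensional kernel $\langle\x\rangle$; thus $P_\sigma\x=\lambda_\sigma\x$ with $\lambda_\sigma$ a real eigenvalue of an orthogonal matrix, so $\lambda_\sigma\in\{+1,-1\}$. Reading off a single coordinate shows $x_{\sigma(v)}=\lambda_\sigma x_v$, so $|x_{\sigma(v)}|=|x_v|$. Since $\Aut(G)$ is transitive, all entries of $\x$ share a common absolute value, and after scaling I may assume $\x\in\{\pm1\}^n$. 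Writing $V^+$ and $V^-$ for the vertex sets carrying $+1$ and $-1$, the kernel condition $\A\x=\0$ says that every vertex has as many neighbours in $V^+$ as in $V^-$; in particular $d=2m$ is even (ruling out odd degrees, consistent with the statement) and each vertex has exactly $m$ neighbours of each sign.

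Next I would pin down $n \bmod 4$. Both $V^+,V^-$ are nonempty (otherwise $\A\x$ would have entries $\pm d\neq0$), so transitivity forces some $\sigma$ with $\lambda_\sigma=-1$; such a $\sigma$ swaps $V^+$ and $V^-$ bijectively, giving $|V^+|=|V^-|=n/2$ and hence $n$ even. Because every vertex of $V^+$ has exactly $m$ neighbours in $V^+$, the induced subgraph $G[V^+]$ is $m$-regular on $n/2$ vertices, so the handshake identity forces $\tfrac{n}{2}m$ to be even. When $d\equiv0\pmod4$ the number $m$ is even and this is automatic, leaving only $n\equiv0\pmod2$; when $d\equiv2\pmod4$ the number $m$ is odd, forcing $n/2$ even, i.e.\ $n\equiv0\pmod4$. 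This already yields all the congruence conditions.

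For the lower bounds I would first note the trivial consequence $n/2=|V^+|\ge m+1$ of the $m$-regularity of $G[V^+]$, i.e.\ $n\ge d+2$, and then show that equality is impossible. If $n=d+2$ then $G[V^+]$ and $G[V^-]$ are forced to be complete graphs $K_{m+1}$, and the bipartite part between them is $m$-regular, hence equals $J-P$ for a permutation matrix $P$ (complete bipartite minus a perfect matching). Writing $\A$ in the resulting block form $\left(\begin{smallmatrix}J-I & J-P\\ J-P^\top & J-I\end{smallmatrix}\right)$ and solving $\A(\mathbf{u},\mathbf{v})^\top=\0$, the two block equations both collapse to the single relation $\mathbf{u}+P\mathbf{v}=(s_u+s_v)\mathbf{1}$, where $s_u,s_v$ are the coordinate sums; a short computation then forces $s_u+s_v=0$ and $\mathbf{u}=-P\mathbf{v}$, so the kernel is $\{(-P\mathbf{v},\mathbf{v}):\mathbf{v}\in\RR^{m+1}\}$ and has dimension $m+1\ge2$. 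This contradicts nullity one, excluding $n=d+2$. Combining with the congruence conditions gives the next admissible order: $n\ge d+4$ when $d\equiv0\pmod4$, and $n\ge d+6$ when $d\equiv2\pmod4$ (since $d+4\equiv2\pmod4$ is then disallowed).

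The routine parts are the eigenvector/automorphism bookkeeping and the handshake parity; the one genuinely computational step, and the main obstacle, is the nullity calculation for $n=d+2$ that rules out the extremal configuration. I expect the cleanest route there is the block reduction above, where the crucial simplifications are $P\mathbf{1}=\mathbf{1}$ and $\mathbf{1}^\top P=\mathbf{1}^\top$; the only point needing care is verifying that both block equations genuinely reduce to the same relation, so that the kernel comes out $(m+1)$-dimensional rather than one-dimensional.
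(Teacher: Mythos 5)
Your proof is correct, and its skeleton matches the paper's: normalise the kernel eigenvector to $\pm 1$ entries via transitivity, deduce $d$ even from the local kernel condition, $n$ even from the equal sizes of the sign classes, the mod-$4$ condition from the Handshaking Lemma applied to the $d/2$-regular subgraph induced on the $+1$ vertices, and the lower bounds by excluding $n = d+2$. Two steps are handled genuinely differently, and the comparison is instructive. For $n$ even, you construct an automorphism with $\lambda_\sigma = -1$ that swaps $V^+$ and $V^-$ (which requires first observing both classes are nonempty); the paper gets $\sum_i x_i = 0$ in one line from orthogonality of $\x$ to the all-ones Perron eigenvector of a regular graph. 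For the exclusion of $n = d+2$, the paper cites the uniqueness of the vertex-transitive graph of even degree $d$ and order $d+2$ (the cocktail-party graph) and then merely exhibits a single kernel vector with zero entries ($+1$ and $-1$ on a non-adjacent pair, $0$ elsewhere); this already contradicts nuttiness, because in a nut graph every kernel vector is a scalar multiple of the nowhere-zero one. Your route is more self-contained: you force the structure ($G[V^+] \cong G[V^-] \cong K_{m+1}$ with bipartite part $J-P$) directly from the signed eigenvector rather than quoting a classification — and in fact no citation is needed by either argument, since any $d$-regular graph on $d+2$ vertices has a $1$-regular complement and hence is the cocktail-party graph. Your block computation of the kernel is correct and yields the exact nullity $m+1 \geq 2$, which is stronger information; what it costs is length, since producing one kernel vector with a zero entry would have sufficed.
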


\begin{proof}
We are going to prove the result in five steps. 

Let $\mathbf{x}$ be a kernel eigenvector of a vertex-transitive nut graph $G$. 
As the zero eigenvalue is simple, $\mathbf{x}$ transforms as a non-degenerate
irreducible representation of $\Aut(G)$. Hence, 
the eigenvector can be chosen to have all entries real.
Under any given automorphism, each eigenvector 
entry $x_i$ transforms to $x_j$ or $-x_j$ for some $j$, 
and therefore all squared entries $x_i\sp2$ are equal. 
We can choose a normalisation where each entry in $\mathbf{x}$ is either $+1$ or $-1$.

A kernel eigenvector obeys the local adjacency condition:
\begin{equation}
\sum_{i \sim j} x_{j} = 0
\end{equation}
for every vertex $i$, which implies that the neighbourhood of every
vertex of a vertex-transitive nut graph contains an equal number 
of vertices bearing entries $+1$ and $-1$
in $\mathbf{x}$. Hence, a vertex-transitive nut graph is of even degree.

Next, note that a kernel eigenvector $\mathbf{x}$ of a $d$-regular graph obeys

\begin{equation}  
\sum_{i=1}^n x_{i} =  0,
\end{equation}
as $\mathbf{x}$ is orthogonal to the all-ones Perron eigenvector that corresponds to the 
maximum eigenvalue $\lambda_{\mathrm{max}} = d$ of a regular graph.
Hence, for a vertex-transitive nut graph $G$, the sum over 
all of the $+1$ and $-1$ entries $x_i$ is zero, from which we deduce that $n$ is even.

Consider the subgraphs of $G$ induced by the vertices with entries $x_i = +1$
and $x_i =-1$, respectively. 
Let $H$ be the subgraph induced by the vertices with $x_i = +1$.
Vertex $i$ in $G$ is of even degree, and in the neighbourhood of $i$ in $G$ 
there are $d/2$ vertices carrying entries of each sign.
The graph $H$ is therefore regular, of degree $d/2$ and of order $n/2$. 
Hence, if $d = 2\, \mathrm{mod}\, 4$, $d/2$ is odd and the order of $H$ must be even. 
Therefore, $n/2$ is even and we have $n = 0\, \mathrm{mod}\, 4$ for 
$d = 2\, \mathrm{mod}\, 4$.

Finally, note a limitation on the smallest vertex-transitive nut graph of given degree $d$. 
A $d$-regular graph has at least $(d+1)$ vertices. Given that both degree and order 
of a vertex-transitive nut graph are even, the order of a vertex-transitive nut graph
obeys $n \geq d+2$. However, a vertex-transitive nut graph with $n = d+2$ is not possible.
For even $n$ and $d$, there is a unique vertex-transitive graph, $H\sp\prime$, of 
order $d+2$~\cite{mckay1990transitive}, which is not a nut graph. 
Indeed, as every vertex of $H\sp\prime$ is adjacent to all but one of the others, 
we can choose a labelling such that vertices
$i$ and $d+1-i$ of $H\sp\prime$ are duplicates for all $0 \leq i \leq n-1$.
In particular, $0$ and $d+1$ have the same neighbourhood, $\{1, \ldots , d\}$.
The subgraph induced by this neighbourhood is $K_d$ minus a perfect matching, 
as edges between $j$ and $d+1-j$ are missing for $1 \leq j \leq d$.
This is the well known cocktail-party graph on
$d$ vertices, $CP(d/2)$. It has $d/2$ distinct pairs of 
non-adjacent vertices.
A kernel eigenvector for $H'$ can be constructed as follows: 
assign values $+1$ to vertex $0$, $-1$ to vertex $d+1$, and zero to all others.
Hence, $H\sp\prime$ is not a nut graph, and the stated limits on $n$ follow. 
\end{proof}

Calculations using the catalogue of vertex-transitive graphs created by Holt and Royle~\cite{HOLT2019} 
indicate that \emph{all} pairs $(n, d)$ compatible with the above conditions and with $n \leq 42$ 
correspond to at least one vertex-transitive nut graph. 

See, for example, the blue entries in Figure~\ref{fig:orders_small_nuts}. This figure gives an overview of the existence of $d$-regular and vertex-transitive nut graphs of order $n \leq 30$. 

Our question is therefore refined to:

\begin{question}
For what pairs $(n, d)$ obeying Theorem~\ref{thm:vtnut} 
does a vertex-transitive nut graph of order $n$ and degree $d$ exist?
(In other words: what pairs are \emph{realisable}, in the sense that a pair $(n, d)$ is realisable if there
exists at least one vertex-transitive nut graph with these parameters?)
\end{question}

We do know that there is at least one infinite family of realisable pairs $(n, d)$ of vertex-transitive graphs,
as all antiprisms with largest ring size not divisible by $3$ are nut graphs;
hence all pairs $(n, 4)$ with $n \ne 0\, \mathrm{mod} \,6$ are vertex-transitive-nut-realisable.

\begin{figure}[!htb]
\begin{center}
 \includegraphics[width=1.0\textwidth, trim={20mm 35mm 30mm 25mm},clip]{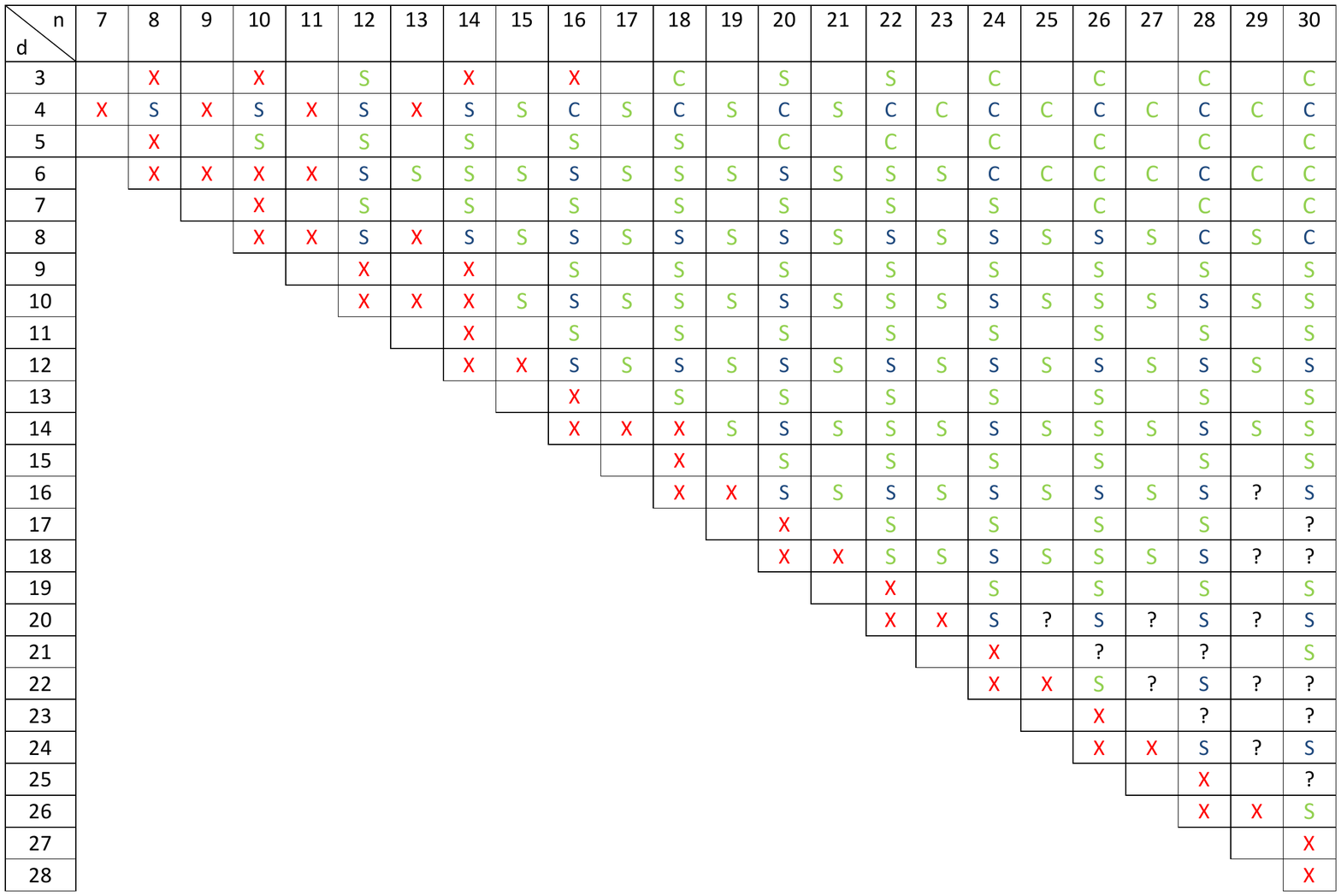}
	\caption{Snapshot of small regular nut graphs: calculations and proven results for order $n$ and degree $d$ for $n \leq 30$. 
Symbols: blank spaces and red crosses indicate non-existence; question marks indicate open cases; the symbol S indicates a seed graph; the symbol C indicates that at least one example of a regular nut graph can be obtained by the Fowler construction; blue (resp., green) symbols indicate the existence (resp., non-existence) of a vertex-transitive nut graph.}
	
	\label{fig:orders_small_nuts}
\end{center}
\end{figure}


\section{Conclusion}
\label{sect:conclusion}

We divide the set of nut graphs into two disjoint subsets. If a graph can be obtained from the Fowler construction,
we call it a \textit{C-graph}. If it cannot be obtained in this way, we call it a seed graph or \textit{S-graph}.
This carries over to 
nut-realisable pairs $(n,d)$. Such a pair is a \textit{C-pair} if there exists at least one $(n,d)$-graph that is a C-graph. On the other hand, if all $(n,d)$-graphs are seed graphs, we call the pair $(n,d)$ an \textit{S-pair} or \textit{seed pair}.

\begin{proposition}
If $d$ is a normal valence, then the following is true. 
If $d$ is odd there are exactly $d$ seed pairs $(n,d)$ and if $d$ is even, there are exactly $2d$ such seed pairs.
In the former case each seed value of $n$ has a different (even) remainder mod $2d$ while in the latter case each $n$ gives a different remainder mod $2d$.
\end{proposition}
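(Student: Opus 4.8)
The plan is to translate the combinatorial dichotomy between seed pairs and C-pairs into a purely arithmetic statement about $N(d)$ modulo $2d$, and then count. First I would set up the dictionary relating C-graphs to membership in $N(d)$. The decisive feature of the Fowler construction is that it raises the order by exactly $2d$, assigns degree $d$ to each of the $2d$ new vertices, and leaves the degree of every original vertex unchanged. Hence if a $d$-regular nut graph $H$ of order $n$ is a C-graph, written $H = F(G,v)$, then every vertex of $G$ must already have had degree $d$, so $G$ is a $d$-regular nut graph of order $n - 2d$; conversely Lemma~\ref{ExistRegNuts} turns any $d$-regular nut graph of order $n-2d$ into a $d$-regular C-graph of order $n$. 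This yields the equivalence that $(n,d)$ is a C-pair if and only if $n-2d \in N(d)$. Since a nut-realisable pair is a seed pair precisely when it is not a C-pair, I obtain the clean criterion that $(n,d)$ is a seed pair if and only if $n \in N(d)$ and $n - 2d \notin N(d)$.

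Next I would exploit Lemma~\ref{ExistRegNuts} in its forward direction, $n \in N(d) \Rightarrow n+2d \in N(d)$, to conclude that $N(d)$ is closed under adding $2d$. Thus in every residue class $r \bmod 2d$ the trace $N(d)\cap(r+2d\ZZ)$ is either empty or an up-set $\{n_0, n_0+2d, n_0+4d, \dots\}$ with a least element $n_0$. The seed criterion then singles out exactly one seed value in each nonempty class, namely this minimum $n_0$: minimality gives $n_0 - 2d \notin N(d)$, while every larger member $n$ of the class satisfies $n-2d \in N(d)$ and is hence a C-pair. Consequently the number of seed pairs equals the number of residue classes modulo $2d$ that meet $N(d)$, and distinct seed values automatically lie in distinct classes.

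Finally I would count the nonempty classes using the normality of $d$. For even $d$ we have $A(d)=\{n : n>d\}$, so normality forces $N(d)$ to be cofinite in the integers; every one of the $2d$ classes is then met, giving exactly $2d$ seed pairs with pairwise distinct remainders modulo $2d$. For odd $d$ the Handshaking Lemma restricts $N(d)$ to even orders and $A(d)=\{n \text{ even} : n>d\}$, so normality makes $N(d)$ cofinite in the even integers; the even residues modulo $2d$ are exactly $0,2,\dots,2d-2$, which are $d$ in number and are all met, giving exactly $d$ seed pairs with pairwise distinct even remainders modulo $2d$.

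I expect the only genuine obstacle to lie in the first step: one must verify that a $d$-regular C-graph cannot secretly arise from a non-regular singular seed, and this is exactly where the degree-preservation property together with the fixed order increment of $2d$ of the Fowler construction are indispensable. Once that equivalence is secured, the remainder is elementary bookkeeping with residue classes.
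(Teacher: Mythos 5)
The paper states this proposition without any proof at all, so there is no argument of the authors to measure yours against; your proposal supplies the missing argument, and its overall structure is correct and is evidently the intended one: translate C-pairs into the arithmetic condition $n-2d \in N(d)$, note that Lemma~\ref{ExistRegNuts} makes $N(d)$ closed under adding $2d$, so that each residue class mod $2d$ meets $N(d)$ either not at all or in an up-set whose minimum is the unique seed value in that class, and finally count the nonempty classes ($2d$ of them for even $d$; exactly the $d$ even classes for odd $d$) using normality together with the parity constraints on admissible orders.

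One step, however, needs a supplement, and it is not the one you flagged. For the forward direction of your equivalence you must show that if a $d$-regular nut graph $H=F(G,v)$ of order $n$ is a C-graph, then $G$ is a $d$-regular \emph{nut} graph. Degree preservation and the fixed increment $2d$ give only the regularity and the order; your closing remark identifies the danger of a non-regular singular seed but overlooks the possibility of a $d$-regular seed $G$ that is singular with nullity one and yet is not a core graph, i.e.\ whose kernel eigenvector vanishes on some vertices. Were that possible, $(n,d)$ would be a C-pair while $n-2d \notin N(d)$, and your criterion would fail. The possibility is excluded by the eigenvector correspondence of Figure~\ref{Fig-FowlerExt}: the lift of a kernel vector $\mathbf{x}$ of $G$ is injective and lands in the kernel of $F(G,v)$, so equality of nullities forces the kernel of $F(G,v)$ to be spanned by this lift; the lift carries the entries of $\mathbf{x}$ unchanged onto the original vertices except at $v$, where it carries $(1-d)x_v$, and places $x_v$ and the neighbour entries on the new vertices. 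If $H$ is a nut graph all these entries are nonzero, so (since $d\geq 2$) every entry of $\mathbf{x}$ is nonzero and $G$ is indeed a nut graph. With this one-line repair your equivalence, and hence the whole count, stands.
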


Let $d$ be a normal valence. We can define two numbers: $n_0(d)$ and $n_1(d)$. Here $n_0(d)$ represents the
smallest value of $n$ such that the pair $(n,d)$ is nut-realisable. On the other hand let $n_1(d)$ represent the smallest
value such that admissible pairs $(n,d)$ with $n \geq n_1(d)$ are nut-realisable.

Our results from Figure~\ref{fig:orders_small_nuts} lead to the following values of $n_0(d)$ and $n_1(d)$ in Table~\ref{table:n0_n1}.

\begin{table}[htb!] 
\centering
\begin{tabular}{r|rr}
$d$ & $n_0(d)$ & $n_1(d)$ \\
\hline
3 & 12 & 18 \\
4 & 8 & 14 \\
5 & 10 & 10 \\
6 & 12 & 12 \\
7 & 12 & 12 \\
8 & 12 & 14 \\
9 & 16 & 16 \\
10 & 15 & 15 \\
11 & 16 & 16 \\
12 &  16 & $\geq 16$ \\
13& 18 & $\geq 18$ \\
14& 19 & $\geq 19$ \\
15&  20 & $\geq 20$ \\
16&  20 & $\geq 20$ \\
17&  22 & $\geq 22$ \\
18&  22 & $\geq 22$ \\
19& 24 &   $\geq 24$ \\
20&  $24$ &  $\geq 24$ \\
21&  $\geq 26$ &  $\geq 26$ \\
22&  $26$ &  $\geq 26$ \\
23&  $\geq 28$ &  $\geq 28$ \\
24&  $28$ &  $\geq 28$ \\
25&  $\geq 30$ &  $\geq 30$ \\
26&  $30$ &  $\geq 30$ \\
 \end{tabular}  

 	\caption{Known values of $n_0(d)$ and $n_1(d)$. An entry of the form $n_0(d) \geq p$, indicates that a complete search of all feasible smaller cases of order less than $p$ has not revealed an example of a $d$-regular nut graph.
 	}
 	 	\label{table:n0_n1}
\end{table}

Variants of the original question suggest themselves.
For example: 

\begin{question}
Determine the set $N(p,g)$ such that there exists a $p$-regular nut graph of 
order $n$ and and girth (at least) $g$ if and only if $n \in N(p,g)$.
\end{question}

\begin{question}
Determine the set $N(p)$ such that there exists a $q$-regular nut graph of 
order $n$ for all degrees $3 \le q \le p$ if and only if $n \in N(p)$.
\end{question}



\section*{Acknowledgements}
Jan Goedgebeur is supported by a Postdoctoral Fellowship of the Research Foundation Flanders (FWO).
The work of Toma\v{z} Pisanski is supported in part by the Slovenian Research Agency (research program P1-0294 
and research projects N1-0032, J1-9187, J1-1690), 
and in part by H2020 Teaming InnoRenew CoE.
Several computations for this work were carried out using the Stevin Supercomputer Infrastructure at Ghent University.




\bibliographystyle{plain}
\bibliography{references.bib}


\section*{Appendix}

\subsection*{Adjacency lists of regular nut graphs}

The graphs in this section act as seed graphs in the proof of Theorem~\ref{thm_orders}. For completeness we also list the seed graphs for $d= 3$ and $d=4$.
These graphs can also be inspected at the database of interesting graphs from the 
\textit{House of Graphs}~\cite{hog} by searching for the keywords ``d-regular nut graph'' (with $d \in \{3,...,11\}$).

%

\subsubsection*{Adjacency lists of 3-regular nut graphs}

\begin{itemize}
\addtolength{\itemsep}{-2mm}
\footnotesize

\item Order 12: \{0: 1 2 3; 1: 0 4 5; 2: 0 6 7; 3: 0 8 9; 4: 1 6 8; 5: 1 7 9; 6: 2 4 10; 7: 2 5 11; 8: 3 4 11; 9: 3 5 10; 10: 6 9 11; 11: 7 8 10\}

\item Order 20: \{0: 1 2 3; 1: 0 4 5; 2: 0 6 7; 3: 0 8 9; 4: 1 6 10; 5: 1 8 11; 6: 2 4 12; 7: 2 11 13; 8: 3 5 14; 9: 3 12 15; 10: 4 13 16; 11: 5 7 17; 12: 6 9 16; 13: 7 10 18; 14: 8 15 19; 15: 9 14 18; 16: 10 12 19; 17: 11 18 19; 18: 13 15 17; 19: 14 16 17\}

\item Order 22: \{0: 9 14 15; 1: 10 16 18; 2: 11 13 14; 3: 11 13 15; 4: 12 18 21; 5: 12 20 21; 6: 15 16 17; 7: 16 19 20; 8: 17 19 20; 9: 0 14 19; 10: 1 17 18; 11: 2 3 13; 12: 4 5 21; 13: 2 3 11; 14: 0 2 9; 15: 0 3 6; 16: 1 6 7; 17: 6 8 10; 18: 1 4 10; 19: 7 8 9; 20: 5 7 8; 21: 4 5 12\}

\end{itemize}

\subsubsection*{Adjacency lists of 4-regular nut graphs}

\begin{itemize}
\addtolength{\itemsep}{-2mm}
\footnotesize

\item Order 8: \{0: 1 2 3 4; 1: 0 2 3 5; 2: 0 1 4 6; 3: 0 1 5 7; 4: 0 2 6 7; 5: 1 3 6 7; 6: 2 4 5 7; 7: 3 4 5 6\}

\item Order 10: \{0: 1 2 3 4; 1: 0 2 3 5; 2: 0 1 4 6; 3: 0 1 5 7; 4: 0 2 6 8; 5: 1 3 7 9; 6: 2 4 8 9; 7: 3 5 8 9; 8: 4 6 7 9; 9: 5 6 7 8\}

\item Order 12: \{0: 1 2 3 4; 1: 0 2 3 4; 2: 0 1 3 5; 3: 0 1 2 5; 4: 0 1 5 6; 5: 2 3 4 7; 6: 4 7 8 9; 7: 5 6 10 11; 8: 6 9 10 11; 9: 6 8 10 11; 10: 7 8 9 11; 11: 7 8 9 10\}

\item Order 14: \{0: 1 2 3 4; 1: 0 2 3 4; 2: 0 1 3 5; 3: 0 1 2 5; 4: 0 1 5 6; 5: 2 3 4 7; 6: 4 8 9 10; 7: 5 8 9 10; 8: 6 7 11 12; 9: 6 7 11 13; 10: 6 7 12 13; 11: 8 9 12 13; 12: 8 10 11 13; 13: 9 10 11 12\}

\item Order 15: \{0: 1 2 3 4; 1: 0 2 3 5; 2: 0 1 6 7; 3: 0 1 6 8; 4: 0 7 9 10; 5: 1 9 11 12; 6: 2 3 10 11; 7: 2 4 13 14; 8: 3 9 13 14; 9: 4 5 8 11; 10: 4 6 12 13; 11: 5 6 9 14; 12: 5 10 13 14; 13: 7 8 10 12; 14: 7 8 11 12\}

\item Order 17: \{0: 1 2 3 4; 1: 0 2 3 4; 2: 0 1 3 5; 3: 0 1 2 6; 4: 0 1 5 7; 5: 2 4 6 8; 6: 3 5 9 10; 7: 4 9 11 12; 8: 5 11 13 14; 9: 6 7 10 13; 10: 6 9 14 15; 11: 7 8 12 16; 12: 7 11 13 16; 13: 8 9 12 15; 14: 8 10 15 16; 15: 10 13 14 16; 16: 11 12 14 15\}

\item Order 19: \{0: 1 2 3 4; 1: 0 2 3 4; 2: 0 1 3 4; 3: 0 1 2 5; 4: 0 1 2 6; 5: 3 6 7 8; 6: 4 5 7 9; 7: 5 6 8 10; 8: 5 7 9 11; 9: 6 8 11 12; 10: 7 11 13 14; 11: 8 9 10 13; 12: 9 13 15 16; 13: 10 11 12 14; 14: 10 13 17 18; 15: 12 16 17 18; 16: 12 15 17 18; 17: 14 15 16 18; 18: 14 15 16 17\}

\item Order 21: \{0: 1 2 3 4; 1: 0 2 3 5; 2: 0 1 6 7; 3: 0 1 8 9; 4: 0 6 10 11; 5: 1 10 12 13; 6: 2 4 14 15; 7: 2 12 14 16; 8: 3 12 16 17; 9: 3 13 14 18; 10: 4 5 16 19; 11: 4 15 18 19; 12: 5 7 8 15; 13: 5 9 18 20; 14: 6 7 9 18; 15: 6 11 12 20; 16: 7 8 10 17; 17: 8 16 19 20; 18: 9 11 13 14; 19: 10 11 17 20; 20: 13 15 17 19\}

\end{itemize}

\subsubsection*{Adjacency lists of 5-regular nut graphs}

\begin{itemize}
\addtolength{\itemsep}{-2mm}
\footnotesize

\item Order 10: \{0: 1 2 3 4 5;  1: 0 2 3 6 7;  2: 0 1 4 6 8;  3: 0 1 4 7 9;  4: 0 2 3 8 9;  5: 0 6 7 8 9;  6: 1 2 5 7 8;  7: 1 3 5 6 9;  8: 2 4 5 6 9;  9: 3 4 5 7 8\}

\item Order 12: \{0: 1 2 3 4 5;  1: 0 2 3 4 6;  2: 0 1 5 7 8;  3: 0 1 6 9 10;  4: 0 1 7 9 11;  5: 0 2 8 9 10;  6: 1 3 7 8 10;  7: 2 4 6 9 11;  8: 2 5 6 10 11;  9: 3 4 5 7 11; 10: 3 5 6 8 11; 11: 4 7 8 9 10\}

\item Order 14: \{0: 1 2 3 4 5;  1: 0 2 3 4 5;  2: 0 1 3 4 6;  3: 0 1 2 7 8;  4: 0 1 2 9 10;  5: 0 1 6 11 12;  6: 2 5 7 8 9;  7: 3 6 8 9 13;  8: 3 6 7 11 12;  9: 4 6 7 10 13; 10: 4 9 11 12 13; 11: 5 8 10 12 13; 12: 5 8 10 11 13; 13: 7 9 10 11 12\}

\item Order 16: \{0: 1 2 3 4 5;  1: 0 2 3 4 5;  2: 0 1 3 4 5;  3: 0 1 2 4 6;  4: 0 1 2 3 7;  5: 0 1 2 8 9;  6: 3 7 8 10 11;  7: 4 6 12 13 14;  8: 5 6 9 10 12;  9: 5 8 10 12 15; 10: 6 8 9 11 15; 11: 6 10 13 14 15; 12: 7 8 9 13 14; 13: 7 11 12 14 15; 14: 7 11 12 13 15; 15: 9 10 11 13 14\}

\item Order 18: \{0: 1 2 3 4 5;  1: 0 2 3 4 5;  2: 0 1 3 4 5;  3: 0 1 2 4 6;  4: 0 1 2 3 6;  5: 0 1 2 7 8;  6: 3 4 9 10 11;  7: 5 9 10 11 12;  8: 5 9 10 13 14;  9: 6 7 8 10 13; 10: 6 7 8 9 13; 11: 6 7 12 15 16; 12: 7 11 15 16 17; 13: 8 9 10 14 17; 14: 8 13 15 16 17; 15: 11 12 14 16 17; 16: 11 12 14 15 17; 17: 12 13 14 15 16\}
\end{itemize}

\subsubsection*{Adjacency lists of 6-regular nut graphs}

\begin{itemize}
\addtolength{\itemsep}{-2mm}
\footnotesize

\item Order 12: \{0: 1 2 3 4 5 6;   1: 0 2 3 4 7 8;   2: 0 1 5 7 9 10;   3: 0 1 5 8 9 11;   4: 0 1 6 7 9 11;   5: 0 2 3 6 10 11;   6: 0 4 5 8 9 10;   7: 1 2 4 8 10 11;   8: 1 3 6 7 9 10;   9: 2 3 4 6 8 11;  10: 2 5 6 7 8 11;  11: 3 4 5 7 9 10\}

\item Order 13: \{0: 1 2 3 4 5 6;   1: 0 2 3 4 5 7;   2: 0 1 3 4 8 9;   3: 0 1 2 6 10 11;   4: 0 1 2 8 10 12;   5: 0 1 6 7 11 12;   6: 0 3 5 7 8 9;   7: 1 5 6 9 10 11;   8: 2 4 6 9 10 12;   9: 2 6 7 8 11 12;  10: 3 4 7 8 11 12;  11: 3 5 7 9 10 12;  12: 4 5 8 9 10 11\}

\item Order 14: \{0: 1 2 3 4 5 6;   1: 0 2 3 4 5 6;   2: 0 1 3 4 5 7;   3: 0 1 2 6 8 9;   4: 0 1 2 7 10 11;   5: 0 1 2 8 9 10;   6: 0 1 3 8 11 12;   7: 2 4 8 10 11 13;   8: 3 5 6 7 12 13;   9: 3 5 10 11 12 13;  10: 4 5 7 9 12 13;  11: 4 6 7 9 12 13;  12: 6 8 9 10 11 13;  13: 7 8 9 10 11 12\}

\item Order 15: \{0: 1 2 3 4 5 6;   1: 0 2 3 4 5 6;   2: 0 1 3 4 5 7;   3: 0 1 2 4 5 8;   4: 0 1 2 3 7 9;   5: 0 1 2 3 10 11;   6: 0 1 10 12 13 14;   7: 2 4 9 12 13 14;   8: 3 9 10 11 12 13;   9: 4 7 8 11 12 14;  10: 5 6 8 11 12 13;  11: 5 8 9 10 13 14;  12: 6 7 8 9 10 14;  13: 6 7 8 10 11 14;  14: 6 7 9 11 12 13\}

\item Order 16: \{0: 1 2 3 4 5 6;   1: 0 2 3 4 5 6;   2: 0 1 3 4 5 6;   3: 0 1 2 4 5 7;   4: 0 1 2 3 7 8;   5: 0 1 2 3 9 10;   6: 0 1 2 7 9 11;   7: 3 4 6 8 12 13;   8: 4 7 11 12 14 15;   9: 5 6 10 11 14 15;  10: 5 9 11 12 13 14;  11: 6 8 9 10 13 15;  12: 7 8 10 13 14 15;  13: 7 10 11 12 14 15;  14: 8 9 10 12 13 15;  15: 8 9 11 12 13 14\}

\item Order 17: \{0: 1 2 3 4 5 6;   1: 0 2 3 4 5 6;   2: 0 1 3 4 5 6;   3: 0 1 2 4 5 6;   4: 0 1 2 3 7 8;   5: 0 1 2 3 7 9;   6: 0 1 2 3 10 11;   7: 4 5 10 12 13 14;   8: 4 9 11 12 15 16;   9: 5 8 13 14 15 16;  10: 6 7 11 13 15 16;  11: 6 8 10 14 15 16;  12: 7 8 13 14 15 16;  13: 7 9 10 12 14 15;  14: 7 9 11 12 13 16;  15: 8 9 10 11 12 13;  16: 8 9 10 11 12 14\}

\item Order 18: \{0: 1 2 3 4 5 6;   1: 0 2 3 4 5 6;   2: 0 1 3 4 5 6;   3: 0 1 2 4 5 6;   4: 0 1 2 3 5 7;   5: 0 1 2 3 4 7;   6: 0 1 2 3 8 9;   7: 4 5 8 9 10 11;   8: 6 7 10 12 13 14;   9: 6 7 12 13 15 16;  10: 7 8 11 14 15 17;  11: 7 10 12 13 14 16;  12: 8 9 11 14 16 17;  13: 8 9 11 15 16 17;  14: 8 10 11 12 15 17;  15: 9 10 13 14 16 17;  16: 9 11 12 13 15 17;  17: 10 12 13 14 15 16\}

\item Order 19: \{0: 1 2 3 4 5 6;   1: 0 2 3 4 5 6;   2: 0 1 3 4 5 6;   3: 0 1 2 4 5 6;   4: 0 1 2 3 5 6;   5: 0 1 2 3 4 7;   6: 0 1 2 3 4 8;   7: 5 8 9 10 11 12;   8: 6 7 9 10 13 14;   9: 7 8 10 11 12 15;  10: 7 8 9 13 16 17;  11: 7 9 12 14 15 16;  12: 7 9 11 15 17 18;  13: 8 10 14 16 17 18;  14: 8 11 13 15 16 18;  15: 9 11 12 14 17 18;  16: 10 11 13 14 17 18;  17: 10 12 13 15 16 18;  18: 12 13 14 15 16 17\}

\item Order 20: \{0: 1 2 3 4 5 6;   1: 0 2 3 4 5 6;   2: 0 1 3 4 5 6;   3: 0 1 2 4 7 8;   4: 0 1 2 3 7 9;   5: 0 1 2 6 7 8;   6: 0 1 2 5 7 9;   7: 3 4 5 6 8 10;   8: 3 5 7 9 10 11;   9: 4 6 8 10 11 12;  10: 7 8 9 12 13 14;  11: 8 9 12 13 14 15;  12: 9 10 11 16 17 18;  13: 10 11 15 16 17 19;  14: 10 11 15 16 18 19;  15: 11 13 14 17 18 19;  16: 12 13 14 17 18 19;  17: 12 13 15 16 18 19;  18: 12 14 15 16 17 19;  19: 13 14 15 16 17 18\}

\item Order 21: \{0: 1 2 3 4 5 6;   1: 0 2 3 4 5 6;   2: 0 1 3 4 5 6;   3: 0 1 2 4 7 8;   4: 0 1 2 3 7 9;   5: 0 1 2 6 7 8;   6: 0 1 2 5 7 9;   7: 3 4 5 6 8 9;   8: 3 5 7 9 10 11;   9: 4 6 7 8 10 11;  10: 8 9 12 13 14 15;  11: 8 9 16 17 18 19;  12: 10 13 14 15 16 17;  13: 10 12 14 16 18 20;  14: 10 12 13 18 19 20;  15: 10 12 16 17 19 20;  16: 11 12 13 15 17 19;  17: 11 12 15 16 18 20;  18: 11 13 14 17 19 20;  19: 11 14 15 16 18 20;  20: 13 14 15 17 18 19\}

\item Order 22: \{0: 1 2 3 4 5 6;   1: 0 2 3 4 5 6;   2: 0 1 3 4 5 6;   3: 0 1 2 4 7 8;   4: 0 1 2 3 7 9;   5: 0 1 2 6 7 8;   6: 0 1 2 5 7 9;   7: 3 4 5 6 8 9;   8: 3 5 7 9 10 11;   9: 4 6 7 8 10 11;  10: 8 9 11 12 13 14;  11: 8 9 10 12 15 16;  12: 10 11 13 17 18 19;  13: 10 12 15 16 20 21;  14: 10 15 17 18 20 21;  15: 11 13 14 16 17 20;  16: 11 13 15 17 19 21;  17: 12 14 15 16 18 19;  18: 12 14 17 19 20 21;  19: 12 16 17 18 20 21;  20: 13 14 15 18 19 21;  21: 13 14 16 18 19 20\}

\item Order 23: \{0: 1 2 3 4 5 6;   1: 0 2 3 4 5 6;   2: 0 1 3 4 5 6;   3: 0 1 2 4 7 8;   4: 0 1 2 3 7 9;   5: 0 1 2 6 7 8;   6: 0 1 2 5 7 9;   7: 3 4 5 6 8 9;   8: 3 5 7 9 10 11;   9: 4 6 7 8 10 11;  10: 8 9 11 12 13 14;  11: 8 9 10 12 13 15;  12: 10 11 13 14 16 17;  13: 10 11 12 15 16 18;  14: 10 12 15 17 18 19;  15: 11 13 14 17 20 21;  16: 12 13 17 20 21 22;  17: 12 14 15 16 19 22;  18: 13 14 19 20 21 22;  19: 14 17 18 20 21 22;  20: 15 16 18 19 21 22;  21: 15 16 18 19 20 22;  22: 16 17 18 19 20 21\}

\end{itemize}

\subsubsection*{Adjacency lists of 7-regular nut graphs}

\begin{itemize}
\addtolength{\itemsep}{-2mm}
\footnotesize

\item Order 12: \{0: 1 2 3 4 5 6 7;   1: 0 2 3 4 5 8 9;   2: 0 1 3 4 6 10 11;   3: 0 1 2 5 8 10 11;   4: 0 1 2 7 8 9 10;   5: 0 1 3 6 7 9 11;   6: 0 2 5 7 9 10 11;   7: 0 4 5 6 8 9 10;   8: 1 3 4 7 9 10 11;   9: 1 4 5 6 7 8 11;  10: 2 3 4 6 7 8 11;  11: 2 3 5 6 8 9 10\}

\item Order 14: \{0: 1 2 3 4 5 6 7;   1: 0 2 3 4 5 6 7;   2: 0 1 3 4 5 8 9;   3: 0 1 2 4 5 8 10;   4: 0 1 2 3 6 9 11;   5: 0 1 2 3 11 12 13;   6: 0 1 4 9 10 12 13;   7: 0 1 8 10 11 12 13;   8: 2 3 7 9 10 11 12;   9: 2 4 6 8 10 12 13;  10: 3 6 7 8 9 11 13;  11: 4 5 7 8 10 12 13;  12: 5 6 7 8 9 11 13;  13: 5 6 7 9 10 11 12\}

\item Order 16: \{0: 1 2 3 4 5 6 7;   1: 0 2 3 4 5 6 7;   2: 0 1 3 4 5 6 7;   3: 0 1 2 4 5 6 8;   4: 0 1 2 3 7 9 10;   5: 0 1 2 3 8 9 11;   6: 0 1 2 3 8 12 13;   7: 0 1 2 4 10 11 12;   8: 3 5 6 11 13 14 15;   9: 4 5 11 12 13 14 15;  10: 4 7 11 12 13 14 15;  11: 5 7 8 9 10 14 15;  12: 6 7 9 10 13 14 15;  13: 6 8 9 10 12 14 15;  14: 8 9 10 11 12 13 15;  15: 8 9 10 11 12 13 14\}

\item Order 18: \{0: 1 2 3 4 5 6 7;   1: 0 2 3 4 5 6 7;   2: 0 1 3 4 5 6 7;   3: 0 1 2 4 5 6 7;   4: 0 1 2 3 5 6 8;   5: 0 1 2 3 4 7 9;   6: 0 1 2 3 4 8 10;   7: 0 1 2 3 5 9 11;   8: 4 6 10 11 12 13 14;   9: 5 7 11 12 13 15 16;  10: 6 8 11 14 15 16 17;  11: 7 8 9 10 12 15 17;  12: 8 9 11 13 14 16 17;  13: 8 9 12 14 15 16 17;  14: 8 10 12 13 15 16 17;  15: 9 10 11 13 14 16 17;  16: 9 10 12 13 14 15 17;  17: 10 11 12 13 14 15 16\}

\item Order 20: \{0: 1 2 3 4 5 6 7;   1: 0 2 3 4 5 6 7;   2: 0 1 3 4 5 6 7;   3: 0 1 2 4 5 6 7;   4: 0 1 2 3 5 6 7;   5: 0 1 2 3 4 6 8;   6: 0 1 2 3 4 5 8;   7: 0 1 2 3 4 8 9;   8: 5 6 7 10 11 12 13;   9: 7 10 14 15 16 17 18;  10: 8 9 11 12 14 15 19;  11: 8 10 12 13 14 15 19;  12: 8 10 11 16 17 18 19;  13: 8 11 14 15 16 17 18;  14: 9 10 11 13 15 16 17;  15: 9 10 11 13 14 18 19;  16: 9 12 13 14 17 18 19;  17: 9 12 13 14 16 18 19;  18: 9 12 13 15 16 17 19;  19: 10 11 12 15 16 17 18\}

\item Order 22: \{0: 1 2 3 4 5 6 7;   1: 0 2 3 4 5 6 7;   2: 0 1 3 4 5 6 8;   3: 0 1 2 4 7 8 9;   4: 0 1 2 3 10 11 12;   5: 0 1 2 6 7 8 9;   6: 0 1 2 5 10 11 12;   7: 0 1 3 5 8 9 10;   8: 2 3 5 7 9 10 11;   9: 3 5 7 8 10 12 13;  10: 4 6 7 8 9 13 14;  11: 4 6 8 15 16 17 18;  12: 4 6 9 13 14 15 19;  13: 9 10 12 16 19 20 21;  14: 10 12 15 16 17 18 20;  15: 11 12 14 16 17 19 21;  16: 11 13 14 15 18 20 21;  17: 11 14 15 18 19 20 21;  18: 11 14 16 17 19 20 21;  19: 12 13 15 17 18 20 21;  20: 13 14 16 17 18 19 21;  21: 13 15 16 17 18 19 20\}

\item Order 24: \{0: 1 2 3 4 5 8 9;   1: 0 2 3 4 5 10 11;   2: 0 1 5 6 7 12 13;   3: 0 1 4 6 7 14 15;   4: 0 1 3 6 7 16 17;   5: 0 1 2 6 7 18 19;   6: 2 3 4 5 7 22 23;   7: 2 3 4 5 6 20 21;   8: 0 9 16 18 20 22 23;   9: 0 8 17 19 21 22 23;  10: 1 11 12 14 20 21 22;  11: 1 10 13 15 20 21 23;  12: 2 10 13 14 16 17 22;  13: 2 11 12 15 16 17 23;  14: 3 10 12 15 18 19 22;  15: 3 11 13 14 18 19 23;  16: 4 8 12 13 17 18 20;  17: 4 9 12 13 16 19 21;  18: 5 8 14 15 16 19 20;  19: 5 9 14 15 17 18 21;  20: 7 8 10 11 16 18 21;  21: 7 9 10 11 17 19 20;  22: 6 8 9 10 12 14 23;  23: 6 8 9 11 13 15 22\}

\end{itemize}

\subsubsection*{Adjacency lists of 8-regular nut graphs}


\begin{itemize}
\addtolength{\itemsep}{-2mm}
\footnotesize

 \item Order 12: \{0: 1 2 3 4 5 6 7 8;  1: 0 2 3 4 5 6 7 9;  2: 0 1 3 4 5 8 9 10;  3: 0 1 2 4 6 8 10 11;  4: 0 1 2 3 7 9 10 11;  5: 0 1 2 6 7 8 10 11;  6: 0 1 3 5 7 9 10 11;  7: 0 1 4 5 6 8 9 11;  8: 0 2 3 5 7 9 10 11;  9: 1 2 4 6 7 8 10 11; 10: 2 3 4 5 6 8 9 11; 11: 3 4 5 6 7 8 9 10\}

 \item Order 14: \{0: 1 2 3 4 5 6 7 8;  1: 0 2 3 4 5 6 7 8;  2: 0 1 3 4 5 6 9 10;  3: 0 1 2 4 5 11 12 13;  4: 0 1 2 3 7 8 9 11;  5: 0 1 2 3 9 10 11 12;  6: 0 1 2 7 10 11 12 13;  7: 0 1 4 6 9 10 12 13;  8: 0 1 4 9 10 11 12 13;  9: 2 4 5 7 8 10 11 13; 10: 2 5 6 7 8 9 12 13; 11: 3 4 5 6 8 9 12 13; 12: 3 5 6 7 8 10 11 13; 13: 3 6 7 8 9 10 11 12\}

 \item Order 15: \{0: 1 2 3 4 5 6 7 8;  1: 0 2 3 4 5 6 7 8;  2: 0 1 3 4 5 6 7 9;  3: 0 1 2 4 5 6 8 10;  4: 0 1 2 3 5 9 11 12;  5: 0 1 2 3 4 9 11 13;  6: 0 1 2 3 10 11 12 14;  7: 0 1 2 10 11 12 13 14;  8: 0 1 3 9 10 12 13 14;  9: 2 4 5 8 10 12 13 14; 10: 3 6 7 8 9 11 13 14; 11: 4 5 6 7 10 12 13 14; 12: 4 6 7 8 9 11 13 14; 13: 5 7 8 9 10 11 12 14; 14: 6 7 8 9 10 11 12 13\}

 \item Order 16: \{0: 1 2 3 4 5 6 7 8;  1: 0 2 3 4 5 6 7 8;  2: 0 1 3 4 5 6 7 8;  3: 0 1 2 4 5 6 7 9;  4: 0 1 2 3 5 9 10 11;  5: 0 1 2 3 4 12 13 14;  6: 0 1 2 3 10 12 13 15;  7: 0 1 2 3 10 12 14 15;  8: 0 1 2 9 11 13 14 15;  9: 3 4 8 10 11 12 13 15; 10: 4 6 7 9 11 12 13 14; 11: 4 8 9 10 12 13 14 15; 12: 5 6 7 9 10 11 14 15; 13: 5 6 8 9 10 11 14 15; 14: 5 7 8 10 11 12 13 15; 15: 6 7 8 9 11 12 13 14\}

 \item Order 17: \{0: 1 2 3 4 5 6 7 8;  1: 0 2 3 4 5 6 7 8;  2: 0 1 3 4 5 6 7 8;  3: 0 1 2 4 5 6 7 8;  4: 0 1 2 3 5 6 9 10;  5: 0 1 2 3 4 9 11 12;  6: 0 1 2 3 4 10 13 14;  7: 0 1 2 3 9 13 15 16;  8: 0 1 2 3 11 12 13 15;  9: 4 5 7 10 11 13 14 16; 10: 4 6 9 11 12 14 15 16; 11: 5 8 9 10 12 14 15 16; 12: 5 8 10 11 13 14 15 16; 13: 6 7 8 9 12 14 15 16; 14: 6 9 10 11 12 13 15 16; 15: 7 8 10 11 12 13 14 16; 16: 7 9 10 11 12 13 14 15\}

 \item Order 18: \{0: 1 2 3 4 5 6 7 8;  1: 0 2 3 4 5 6 7 8;  2: 0 1 3 4 5 6 7 8;  3: 0 1 2 4 5 6 7 8;  4: 0 1 2 3 5 6 7 9;  5: 0 1 2 3 4 6 9 10;  6: 0 1 2 3 4 5 11 12;  7: 0 1 2 3 4 9 13 14;  8: 0 1 2 3 9 13 15 16;  9: 4 5 7 8 10 11 15 17; 10: 5 9 11 12 14 15 16 17; 11: 6 9 10 12 13 14 16 17; 12: 6 10 11 13 14 15 16 17; 13: 7 8 11 12 14 15 16 17; 14: 7 10 11 12 13 15 16 17; 15: 8 9 10 12 13 14 16 17; 16: 8 10 11 12 13 14 15 17; 17: 9 10 11 12 13 14 15 16\}

 \item Order 19: \{0: 1 2 3 4 5 6 7 8;  1: 0 2 3 4 5 6 7 8;  2: 0 1 3 4 5 6 7 8;  3: 0 1 2 4 5 6 7 8;  4: 0 1 2 3 5 6 7 8;  5: 0 1 2 3 4 9 10 11;  6: 0 1 2 3 4 9 12 13;  7: 0 1 2 3 4 10 14 15;  8: 0 1 2 3 4 12 16 17;  9: 5 6 10 11 14 15 16 17; 10: 5 7 9 11 12 14 15 18; 11: 5 9 10 12 13 16 17 18; 12: 6 8 10 11 13 14 16 18; 13: 6 11 12 14 15 16 17 18; 14: 7 9 10 12 13 15 17 18; 15: 7 9 10 13 14 16 17 18; 16: 8 9 11 12 13 15 17 18; 17: 8 9 11 13 14 15 16 18; 18: 10 11 12 13 14 15 16 17\}

 \item Order 20: \{0: 1 2 3 4 5 6 7 8;  1: 0 2 3 4 5 6 7 8;  2: 0 1 3 4 5 6 7 8;  3: 0 1 2 4 5 6 7 8;  4: 0 1 2 3 5 6 7 8;  5: 0 1 2 3 4 6 7 8;  6: 0 1 2 3 4 5 9 10;  7: 0 1 2 3 4 5 9 11;  8: 0 1 2 3 4 5 12 13;  9: 6 7 10 11 12 14 15 16; 10: 6 9 12 13 14 15 17 18; 11: 7 9 13 14 15 16 17 19; 12: 8 9 10 14 16 17 18 19; 13: 8 10 11 15 16 17 18 19; 14: 9 10 11 12 16 17 18 19; 15: 9 10 11 13 16 17 18 19; 16: 9 11 12 13 14 15 18 19; 17: 10 11 12 13 14 15 18 19; 18: 10 12 13 14 15 16 17 19; 19: 11 12 13 14 15 16 17 18\}

 \item Order 21: \{0: 1 2 3 4 5 6 7 8;  1: 0 2 3 4 5 6 7 8;  2: 0 1 3 4 5 6 7 8;  3: 0 1 2 4 5 6 7 8;  4: 0 1 2 3 5 6 7 8;  5: 0 1 2 3 4 6 9 10;  6: 0 1 2 3 4 5 9 11;  7: 0 1 2 3 4 8 9 10;  8: 0 1 2 3 4 7 9 11;  9: 5 6 7 8 12 13 14 15; 10: 5 7 12 13 16 17 18 19; 11: 6 8 12 14 16 17 18 20; 12: 9 10 11 15 16 17 19 20; 13: 9 10 14 15 16 18 19 20; 14: 9 11 13 15 16 18 19 20; 15: 9 12 13 14 17 18 19 20; 16: 10 11 12 13 14 17 19 20; 17: 10 11 12 15 16 18 19 20; 18: 10 11 13 14 15 17 19 20; 19: 10 12 13 14 15 16 17 18; 20: 11 12 13 14 15 16 17 18\}

 \item Order 22: \{0: 1 2 3 4 5 6 7 8;  1: 0 2 3 4 5 6 7 8;  2: 0 1 3 4 5 6 7 8;  3: 0 1 2 4 5 6 7 8;  4: 0 1 2 3 5 6 7 8;  5: 0 1 2 3 4 6 9 10;  6: 0 1 2 3 4 5 9 11;  7: 0 1 2 3 4 8 9 10;  8: 0 1 2 3 4 7 9 11;  9: 5 6 7 8 10 11 12 13; 10: 5 7 9 11 12 13 14 15; 11: 6 8 9 10 12 14 16 17; 12: 9 10 11 13 15 18 19 20; 13: 9 10 12 14 16 18 19 21; 14: 10 11 13 15 17 18 20 21; 15: 10 12 14 16 17 19 20 21; 16: 11 13 15 17 18 19 20 21; 17: 11 14 15 16 18 19 20 21; 18: 12 13 14 16 17 19 20 21; 19: 12 13 15 16 17 18 20 21; 20: 12 14 15 16 17 18 19 21; 21: 13 14 15 16 17 18 19 20\}

 \item Order 23: \{0: 1 2 3 4 5 6 7 8;  1: 0 2 3 4 5 6 7 8;  2: 0 1 3 4 5 6 7 8;  3: 0 1 2 4 5 6 7 8;  4: 0 1 2 3 5 6 7 8;  5: 0 1 2 3 4 6 9 10;  6: 0 1 2 3 4 5 9 11;  7: 0 1 2 3 4 8 9 10;  8: 0 1 2 3 4 7 9 11;  9: 5 6 7 8 10 11 12 13; 10: 5 7 9 11 12 13 14 15; 11: 6 8 9 10 12 13 14 15; 12: 9 10 11 13 14 15 16 17; 13: 9 10 11 12 16 18 19 20; 14: 10 11 12 15 17 18 21 22; 15: 10 11 12 14 19 20 21 22; 16: 12 13 17 18 19 20 21 22; 17: 12 14 16 18 19 20 21 22; 18: 13 14 16 17 19 20 21 22; 19: 13 15 16 17 18 20 21 22; 20: 13 15 16 17 18 19 21 22; 21: 14 15 16 17 18 19 20 22; 22: 14 15 16 17 18 19 20 21\}

 \item Order 24: \{0: 1 2 3 4 5 6 7 8;  1: 0 2 3 4 5 6 7 8;  2: 0 1 3 4 5 6 7 8;  3: 0 1 2 4 5 6 7 8;  4: 0 1 2 3 5 6 7 8;  5: 0 1 2 3 4 6 9 10;  6: 0 1 2 3 4 5 9 11;  7: 0 1 2 3 4 8 9 10;  8: 0 1 2 3 4 7 9 11;  9: 5 6 7 8 10 11 12 13; 10: 5 7 9 11 12 13 14 15; 11: 6 8 9 10 12 13 14 15; 12: 9 10 11 13 14 15 16 17; 13: 9 10 11 12 14 16 17 18; 14: 10 11 12 13 15 19 20 21; 15: 10 11 12 14 16 19 22 23; 16: 12 13 15 18 20 21 22 23; 17: 12 13 18 19 20 21 22 23; 18: 13 16 17 19 20 21 22 23; 19: 14 15 17 18 20 21 22 23; 20: 14 16 17 18 19 21 22 23; 21: 14 16 17 18 19 20 22 23; 22: 15 16 17 18 19 20 21 23; 23: 15 16 17 18 19 20 21 22\}

 \item Order 25: \{0: 1 2 3 4 5 6 7 8;  1: 0 2 3 4 5 6 7 8;  2: 0 1 3 4 5 6 7 8;  3: 0 1 2 4 5 6 7 8;  4: 0 1 2 3 5 6 7 8;  5: 0 1 2 3 4 6 9 10;  6: 0 1 2 3 4 5 9 11;  7: 0 1 2 3 4 8 9 10;  8: 0 1 2 3 4 7 9 11;  9: 5 6 7 8 10 11 12 13; 10: 5 7 9 11 12 13 14 15; 11: 6 8 9 10 12 13 14 15; 12: 9 10 11 13 14 15 16 17; 13: 9 10 11 12 14 15 18 19; 14: 10 11 12 13 16 17 20 21; 15: 10 11 12 13 16 20 22 23; 16: 12 14 15 18 19 21 22 24; 17: 12 14 18 20 21 22 23 24; 18: 13 16 17 19 21 22 23 24; 19: 13 16 18 20 21 22 23 24; 20: 14 15 17 19 21 22 23 24; 21: 14 16 17 18 19 20 23 24; 22: 15 16 17 18 19 20 23 24; 23: 15 17 18 19 20 21 22 24; 24: 16 17 18 19 20 21 22 23\}

 \item Order 26: \{0: 1 2 3 4 5 6 7 8;  1: 0 2 3 4 5 6 7 8;  2: 0 1 3 4 5 6 7 8;  3: 0 1 2 4 5 6 9 10;  4: 0 1 2 3 5 9 11 12;  5: 0 1 2 3 4 11 12 13;  6: 0 1 2 3 11 14 15 16;  7: 0 1 2 8 9 10 11 12;  8: 0 1 2 7 9 10 11 12;  9: 3 4 7 8 10 11 12 13; 10: 3 7 8 9 11 12 13 14; 11: 4 5 6 7 8 9 10 12; 12: 4 5 7 8 9 10 11 13; 13: 5 9 10 12 14 15 16 17; 14: 6 10 13 15 16 17 18 19; 15: 6 13 14 16 17 18 19 20; 16: 6 13 14 15 18 21 22 23; 17: 13 14 15 19 21 22 24 25; 18: 14 15 16 20 21 23 24 25; 19: 14 15 17 20 22 23 24 25; 20: 15 18 19 21 22 23 24 25; 21: 16 17 18 20 22 23 24 25; 22: 16 17 19 20 21 23 24 25; 23: 16 18 19 20 21 22 24 25; 24: 17 18 19 20 21 22 23 25; 25: 17 18 19 20 21 22 23 24\}

 \item Order 27: \{0: 1 2 3 4 5 6 7 8;  1: 0 2 3 4 5 6 7 8;  2: 0 1 3 4 5 6 7 8;  3: 0 1 2 4 5 6 7 8;  4: 0 1 2 3 5 6 7 8;  5: 0 1 2 3 4 6 9 10;  6: 0 1 2 3 4 5 9 11;  7: 0 1 2 3 4 8 9 10;  8: 0 1 2 3 4 7 9 11;  9: 5 6 7 8 10 11 12 13; 10: 5 7 9 11 12 13 14 15; 11: 6 8 9 10 12 13 14 15; 12: 9 10 11 13 14 15 16 17; 13: 9 10 11 12 14 15 16 17; 14: 10 11 12 13 15 16 17 18; 15: 10 11 12 13 14 16 18 19; 16: 12 13 14 15 17 20 21 22; 17: 12 13 14 16 18 19 20 23; 18: 14 15 17 19 23 24 25 26; 19: 15 17 18 21 22 24 25 26; 20: 16 17 21 22 23 24 25 26; 21: 16 19 20 22 23 24 25 26; 22: 16 19 20 21 23 24 25 26; 23: 17 18 20 21 22 24 25 26; 24: 18 19 20 21 22 23 25 26; 25: 18 19 20 21 22 23 24 26; 26: 18 19 20 21 22 23 24 25\}


 \item Order 29: \{0: 1 2 3 4 5 6 7 8;  1: 0 2 3 4 5 6 7 8;  2: 0 1 3 4 5 6 7 8;  3: 0 1 2 4 5 6 7 8;  4: 0 1 2 3 5 6 9 10;  5: 0 1 2 3 4 7 9 10;  6: 0 1 2 3 4 9 10 11;  7: 0 1 2 3 5 9 12 13;  8: 0 1 2 3 9 11 12 14;  9: 4 5 6 7 8 10 11 12; 10: 4 5 6 9 11 12 13 14; 11: 6 8 9 10 12 13 14 15; 12: 7 8 9 10 11 13 14 15; 13: 7 10 11 12 14 15 16 17; 14: 8 10 11 12 13 15 16 17; 15: 11 12 13 14 16 17 18 19; 16: 13 14 15 17 18 19 20 21; 17: 13 14 15 16 18 19 20 22; 18: 15 16 17 21 22 23 24 25; 19: 15 16 17 23 24 26 27 28; 20: 16 17 21 23 25 26 27 28; 21: 16 18 20 22 23 24 25 26; 22: 17 18 21 23 24 25 27 28; 23: 18 19 20 21 22 26 27 28; 24: 18 19 21 22 25 26 27 28; 25: 18 20 21 22 24 26 27 28; 26: 19 20 21 23 24 25 27 28; 27: 19 20 22 23 24 25 26 28; 28: 19 20 22 23 24 25 26 27\}

\end{itemize}

\subsubsection*{Adjacency lists of 9-regular nut graphs}

\begin{itemize}
\addtolength{\itemsep}{-2mm}
\footnotesize

 \item Order 16: \{0: 1 2 3 4 5 6 7 8 9;  1: 0 2 3 4 5 6 7 8 9;  2: 0 1 3 4 5 6 7 8 9;  3: 0 1 2 4 5 6 7 10 11;  4: 0 1 2 3 5 8 10 12 13;  5: 0 1 2 3 4 11 12 13 14;  6: 0 1 2 3 9 11 12 14 15;  7: 0 1 2 3 10 11 12 13 15;  8: 0 1 2 4 9 10 13 14 15;  9: 0 1 2 6 8 11 13 14 15; 10: 3 4 7 8 11 12 13 14 15; 11: 3 5 6 7 9 10 12 14 15; 12: 4 5 6 7 10 11 13 14 15; 13: 4 5 7 8 9 10 12 14 15; 14: 5 6 8 9 10 11 12 13 15; 15: 6 7 8 9 10 11 12 13 14\}

 \item Order 18: \{0: 1 2 3 4 5 6 7 8 9;  1: 0 2 3 4 5 6 7 8 9;  2: 0 1 3 4 5 6 7 8 9;  3: 0 1 2 4 5 6 7 8 9;  4: 0 1 2 3 5 6 7 8 10;  5: 0 1 2 3 4 6 11 12 13;  6: 0 1 2 3 4 5 14 15 16;  7: 0 1 2 3 4 10 11 14 17;  8: 0 1 2 3 4 11 12 15 16;  9: 0 1 2 3 10 11 12 13 17; 10: 4 7 9 12 13 14 15 16 17; 11: 5 7 8 9 13 14 15 16 17; 12: 5 8 9 10 13 14 15 16 17; 13: 5 9 10 11 12 14 15 16 17; 14: 6 7 10 11 12 13 15 16 17; 15: 6 8 10 11 12 13 14 16 17; 16: 6 8 10 11 12 13 14 15 17; 17: 7 9 10 11 12 13 14 15 16\}

 \item Order 20: \{0: 1 2 3 4 5 6 7 8 9;  1: 0 2 3 4 5 6 7 8 9;  2: 0 1 3 4 5 6 7 8 9;  3: 0 1 2 4 5 6 7 8 9;  4: 0 1 2 3 5 6 7 8 9;  5: 0 1 2 3 4 6 7 8 10;  6: 0 1 2 3 4 5 10 11 12;  7: 0 1 2 3 4 5 11 13 14;  8: 0 1 2 3 4 5 12 13 15;  9: 0 1 2 3 4 11 14 15 16; 10: 5 6 12 14 15 16 17 18 19; 11: 6 7 9 12 13 15 17 18 19; 12: 6 8 10 11 13 16 17 18 19; 13: 7 8 11 12 14 16 17 18 19; 14: 7 9 10 13 15 16 17 18 19; 15: 8 9 10 11 14 16 17 18 19; 16: 9 10 12 13 14 15 17 18 19; 17: 10 11 12 13 14 15 16 18 19; 18: 10 11 12 13 14 15 16 17 19; 19: 10 11 12 13 14 15 16 17 18\}

 \item Order 22: \{0: 1 2 3 4 5 6 7 8 9;  1: 0 2 3 4 5 6 7 8 9;  2: 0 1 3 4 5 6 7 8 9;  3: 0 1 2 4 5 6 7 8 9;  4: 0 1 2 3 5 6 7 8 9;  5: 0 1 2 3 4 6 7 8 9;  6: 0 1 2 3 4 5 7 8 10;  7: 0 1 2 3 4 5 6 8 10;  8: 0 1 2 3 4 5 6 7 11;  9: 0 1 2 3 4 5 11 12 13; 10: 6 7 12 13 14 15 16 17 18; 11: 8 9 12 14 15 16 19 20 21; 12: 9 10 11 13 14 15 17 19 20; 13: 9 10 12 14 16 17 18 19 21; 14: 10 11 12 13 15 16 18 20 21; 15: 10 11 12 14 17 18 19 20 21; 16: 10 11 13 14 17 18 19 20 21; 17: 10 12 13 15 16 18 19 20 21; 18: 10 13 14 15 16 17 19 20 21; 19: 11 12 13 15 16 17 18 20 21; 20: 11 12 14 15 16 17 18 19 21; 21: 11 13 14 15 16 17 18 19 20\}

 \item Order 24: \{0: 1 2 3 4 5 6 7 8 9;  1: 0 2 3 4 5 6 7 8 9;  2: 0 1 3 4 5 6 7 8 9;  3: 0 1 2 4 5 6 10 11 12;  4: 0 1 2 3 5 6 10 11 13;  5: 0 1 2 3 4 14 15 16 17;  6: 0 1 2 3 4 18 19 20 21;  7: 0 1 2 8 10 14 15 18 19;  8: 0 1 2 7 10 14 15 18 19;  9: 0 1 2 10 11 12 13 14 15; 10: 3 4 7 8 9 11 12 13 14; 11: 3 4 9 10 12 13 14 15 16; 12: 3 9 10 11 13 14 15 16 22; 13: 4 9 10 11 12 17 20 22 23; 14: 5 7 8 9 10 11 12 21 22; 15: 5 7 8 9 11 12 18 20 23; 16: 5 11 12 17 19 20 21 22 23; 17: 5 13 16 18 19 20 21 22 23; 18: 6 7 8 15 17 20 21 22 23; 19: 6 7 8 16 17 20 21 22 23; 20: 6 13 15 16 17 18 19 21 23; 21: 6 14 16 17 18 19 20 22 23; 22: 12 13 14 16 17 18 19 21 23; 23: 13 15 16 17 18 19 20 21 22\}

 \item Order 26: \{0: 1 2 3 4 5 6 7 8 9;  1: 0 2 3 4 5 6 7 8 9;  2: 0 1 3 4 5 6 7 8 9;  3: 0 1 2 4 5 6 7 8 9;  4: 0 1 2 3 5 6 7 8 10;  5: 0 1 2 3 4 6 7 11 12;  6: 0 1 2 3 4 5 11 13 14;  7: 0 1 2 3 4 5 15 16 17;  8: 0 1 2 3 4 9 13 15 16;  9: 0 1 2 3 8 10 11 12 13; 10: 4 9 11 12 13 14 15 16 17; 11: 5 6 9 10 12 13 14 15 16; 12: 5 9 10 11 13 14 15 16 17; 13: 6 8 9 10 11 12 18 19 20; 14: 6 10 11 12 15 18 19 21 22; 15: 7 8 10 11 12 14 18 21 23; 16: 7 8 10 11 12 17 20 24 25; 17: 7 10 12 16 20 22 23 24 25; 18: 13 14 15 19 21 22 23 24 25; 19: 13 14 18 20 21 22 23 24 25; 20: 13 16 17 19 21 22 23 24 25; 21: 14 15 18 19 20 22 23 24 25; 22: 14 17 18 19 20 21 23 24 25; 23: 15 17 18 19 20 21 22 24 25; 24: 16 17 18 19 20 21 22 23 25; 25: 16 17 18 19 20 21 22 23 24\}

 \item Order 28: \{0: 1 2 3 4 5 6 7 8 9;  1: 0 2 3 4 5 6 7 8 9;  2: 0 1 3 4 5 6 7 8 9;  3: 0 1 2 4 5 6 10 11 12;  4: 0 1 2 3 5 6 10 11 13;  5: 0 1 2 3 4 10 12 13 14;  6: 0 1 2 3 4 12 15 16 17;  7: 0 1 2 8 9 10 12 15 18;  8: 0 1 2 7 9 10 12 15 18;  9: 0 1 2 7 8 10 12 15 18; 10: 3 4 5 7 8 9 11 12 13; 11: 3 4 10 12 13 14 15 16 17; 12: 3 5 6 7 8 9 10 11 13; 13: 4 5 10 11 12 14 15 16 17; 14: 5 11 13 15 16 17 18 19 20; 15: 6 7 8 9 11 13 14 16 19; 16: 6 11 13 14 15 19 21 22 23; 17: 6 11 13 14 19 20 24 25 26; 18: 7 8 9 14 21 22 23 24 27; 19: 14 15 16 17 20 21 25 26 27; 20: 14 17 19 22 23 24 25 26 27; 21: 16 18 19 22 23 24 25 26 27; 22: 16 18 20 21 23 24 25 26 27; 23: 16 18 20 21 22 24 25 26 27; 24: 17 18 20 21 22 23 25 26 27; 25: 17 19 20 21 22 23 24 26 27; 26: 17 19 20 21 22 23 24 25 27; 27: 18 19 20 21 22 23 24 25 26\}

 \item Order 30: \{0: 1 2 3 4 5 6 7 8 9;  1: 0 2 3 4 5 6 7 8 9;  2: 0 1 3 4 5 6 7 8 9;  3: 0 1 2 4 5 6 7 8 9;  4: 0 1 2 3 5 6 7 8 9;  5: 0 1 2 3 4 6 7 8 9;  6: 0 1 2 3 4 5 7 10 11;  7: 0 1 2 3 4 5 6 12 13;  8: 0 1 2 3 4 5 10 11 12;  9: 0 1 2 3 4 5 10 14 15; 10: 6 8 9 12 13 16 17 18 19; 11: 6 8 13 20 21 22 23 24 25; 12: 7 8 10 13 14 15 16 17 18; 13: 7 10 11 12 14 15 16 17 18; 14: 9 12 13 15 16 17 18 19 20; 15: 9 12 13 14 16 17 18 19 20; 16: 10 12 13 14 15 17 18 19 20; 17: 10 12 13 14 15 16 18 19 20; 18: 10 12 13 14 15 16 17 21 26; 19: 10 14 15 16 17 22 23 24 26; 20: 11 14 15 16 17 22 27 28 29; 21: 11 18 23 24 25 26 27 28 29; 22: 11 19 20 23 25 26 27 28 29; 23: 11 19 21 22 24 25 27 28 29; 24: 11 19 21 23 25 26 27 28 29; 25: 11 21 22 23 24 26 27 28 29; 26: 18 19 21 22 24 25 27 28 29; 27: 20 21 22 23 24 25 26 28 29; 28: 20 21 22 23 24 25 26 27 29; 29: 20 21 22 23 24 25 26 27 28\}

 \item Order 32: \{0: 1 2 3 4 5 6 7 8 9;  1: 0 2 3 4 5 6 7 8 9;  2: 0 1 3 4 5 6 7 8 9;  3: 0 1 2 4 5 6 7 8 9;  4: 0 1 2 3 5 6 7 8 9;  5: 0 1 2 3 4 6 7 8 9;  6: 0 1 2 3 4 5 7 10 11;  7: 0 1 2 3 4 5 6 12 13;  8: 0 1 2 3 4 5 10 11 12;  9: 0 1 2 3 4 5 10 14 15; 10: 6 8 9 11 14 16 17 18 19; 11: 6 8 10 12 13 15 16 20 21; 12: 7 8 11 13 14 15 16 17 18; 13: 7 11 12 14 15 16 17 18 19; 14: 9 10 12 13 15 16 17 18 19; 15: 9 11 12 13 14 16 17 18 19; 16: 10 11 12 13 14 15 17 18 19; 17: 10 12 13 14 15 16 18 19 20; 18: 10 12 13 14 15 16 17 19 22; 19: 10 13 14 15 16 17 18 23 24; 20: 11 17 21 22 23 25 26 27 28; 21: 11 20 22 23 25 26 27 29 30; 22: 18 20 21 23 24 25 29 30 31; 23: 19 20 21 22 24 25 28 29 31; 24: 19 22 23 26 27 28 29 30 31; 25: 20 21 22 23 26 27 28 30 31; 26: 20 21 24 25 27 28 29 30 31; 27: 20 21 24 25 26 28 29 30 31; 28: 20 23 24 25 26 27 29 30 31; 29: 21 22 23 24 26 27 28 30 31; 30: 21 22 24 25 26 27 28 29 31; 31: 22 23 24 25 26 27 28 29 30\}

\end{itemize}

\subsubsection*{Adjacency lists of 10-regular nut graphs}

\begin{itemize}
\addtolength{\itemsep}{-2mm}
\footnotesize

 \item Order 15: \{0: 1 2 3 4 5 6 7 8 9 10;  1: 0 2 3 4 5 6 7 8 9 11;  2: 0 1 3 4 5 6 7 8 10 12;  3: 0 1 2 4 5 6 7 9 11 13;  4: 0 1 2 3 5 6 7 10 13 14;  5: 0 1 2 3 4 6 9 11 12 14;  6: 0 1 2 3 4 5 11 12 13 14;  7: 0 1 2 3 4 8 10 12 13 14;  8: 0 1 2 7 9 10 11 12 13 14;  9: 0 1 3 5 8 10 11 12 13 14; 10: 0 2 4 7 8 9 11 12 13 14; 11: 1 3 5 6 8 9 10 12 13 14; 12: 2 5 6 7 8 9 10 11 13 14; 13: 3 4 6 7 8 9 10 11 12 14; 14: 4 5 6 7 8 9 10 11 12 13\}

 \item Order 16: \{0: 1 2 3 4 5 6 7 8 9 10;  1: 0 2 3 4 5 6 7 8 9 10;  2: 0 1 3 4 5 6 7 8 9 11;  3: 0 1 2 4 5 6 7 8 10 11;  4: 0 1 2 3 5 6 11 12 13 14;  5: 0 1 2 3 4 7 12 13 14 15;  6: 0 1 2 3 4 8 12 13 14 15;  7: 0 1 2 3 5 9 10 11 12 15;  8: 0 1 2 3 6 9 12 13 14 15;  9: 0 1 2 7 8 10 11 13 14 15; 10: 0 1 3 7 9 11 12 13 14 15; 11: 2 3 4 7 9 10 12 13 14 15; 12: 4 5 6 7 8 10 11 13 14 15; 13: 4 5 6 8 9 10 11 12 14 15; 14: 4 5 6 8 9 10 11 12 13 15; 15: 5 6 7 8 9 10 11 12 13 14\}

 \item Order 17: \{0: 1 2 3 4 5 6 7 8 9 10;  1: 0 2 3 4 5 6 7 8 9 10;  2: 0 1 3 4 5 6 7 8 9 10;  3: 0 1 2 4 5 6 7 8 9 11;  4: 0 1 2 3 5 6 7 10 12 13;  5: 0 1 2 3 4 8 12 13 14 15;  6: 0 1 2 3 4 10 11 12 14 16;  7: 0 1 2 3 4 12 13 14 15 16;  8: 0 1 2 3 5 11 13 14 15 16;  9: 0 1 2 3 11 12 13 14 15 16; 10: 0 1 2 4 6 11 12 14 15 16; 11: 3 6 8 9 10 12 13 14 15 16; 12: 4 5 6 7 9 10 11 13 15 16; 13: 4 5 7 8 9 11 12 14 15 16; 14: 5 6 7 8 9 10 11 13 15 16; 15: 5 7 8 9 10 11 12 13 14 16; 16: 6 7 8 9 10 11 12 13 14 15\}

 \item Order 18: \{0: 1 2 3 4 5 6 7 8 9 10;  1: 0 2 3 4 5 6 7 8 9 10;  2: 0 1 3 4 5 6 7 8 9 10;  3: 0 1 2 4 5 6 7 8 9 10;  4: 0 1 2 3 5 6 7 8 11 12;  5: 0 1 2 3 4 6 9 13 14 15;  6: 0 1 2 3 4 5 10 13 16 17;  7: 0 1 2 3 4 11 12 13 14 15;  8: 0 1 2 3 4 13 14 15 16 17;  9: 0 1 2 3 5 11 12 14 16 17; 10: 0 1 2 3 6 11 12 15 16 17; 11: 4 7 9 10 12 13 14 15 16 17; 12: 4 7 9 10 11 13 14 15 16 17; 13: 5 6 7 8 11 12 14 15 16 17; 14: 5 7 8 9 11 12 13 15 16 17; 15: 5 7 8 10 11 12 13 14 16 17; 16: 6 8 9 10 11 12 13 14 15 17; 17: 6 8 9 10 11 12 13 14 15 16\}

 \item Order 19: \{0: 1 2 3 4 5 6 7 8 9 10;  1: 0 2 3 4 5 6 7 8 9 10;  2: 0 1 3 4 5 6 7 8 9 10;  3: 0 1 2 4 5 6 7 8 9 10;  4: 0 1 2 3 5 6 7 8 9 11;  5: 0 1 2 3 4 6 7 10 11 12;  6: 0 1 2 3 4 5 10 13 14 15;  7: 0 1 2 3 4 5 12 13 16 17;  8: 0 1 2 3 4 11 14 16 17 18;  9: 0 1 2 3 4 13 15 16 17 18; 10: 0 1 2 3 5 6 12 14 15 18; 11: 4 5 8 12 13 14 15 16 17 18; 12: 5 7 10 11 13 14 15 16 17 18; 13: 6 7 9 11 12 14 15 16 17 18; 14: 6 8 10 11 12 13 15 16 17 18; 15: 6 9 10 11 12 13 14 16 17 18; 16: 7 8 9 11 12 13 14 15 17 18; 17: 7 8 9 11 12 13 14 15 16 18; 18: 8 9 10 11 12 13 14 15 16 17\}

 \item Order 20: \{0: 1 2 3 4 5 6 7 8 9 10;  1: 0 2 3 4 5 6 7 8 9 10;  2: 0 1 3 4 5 6 7 8 9 10;  3: 0 1 2 4 5 6 7 8 9 10;  4: 0 1 2 3 5 6 7 8 9 10;  5: 0 1 2 3 4 6 7 8 11 12;  6: 0 1 2 3 4 5 7 9 11 13;  7: 0 1 2 3 4 5 6 14 15 16;  8: 0 1 2 3 4 5 11 13 17 18;  9: 0 1 2 3 4 6 12 14 15 19; 10: 0 1 2 3 4 13 16 17 18 19; 11: 5 6 8 12 14 15 16 17 18 19; 12: 5 9 11 13 14 15 16 17 18 19; 13: 6 8 10 12 14 15 16 17 18 19; 14: 7 9 11 12 13 15 16 17 18 19; 15: 7 9 11 12 13 14 16 17 18 19; 16: 7 10 11 12 13 14 15 17 18 19; 17: 8 10 11 12 13 14 15 16 18 19; 18: 8 10 11 12 13 14 15 16 17 19; 19: 9 10 11 12 13 14 15 16 17 18\}

 \item Order 21: \{0: 1 2 3 4 5 6 7 8 9 10;  1: 0 2 3 4 5 6 7 8 9 10;  2: 0 1 3 4 5 6 7 8 9 10;  3: 0 1 2 4 5 6 7 8 9 10;  4: 0 1 2 3 5 6 7 8 9 10;  5: 0 1 2 3 4 6 7 8 9 11;  6: 0 1 2 3 4 5 7 8 10 12;  7: 0 1 2 3 4 5 6 11 13 14;  8: 0 1 2 3 4 5 6 15 16 17;  9: 0 1 2 3 4 5 13 14 18 19; 10: 0 1 2 3 4 6 12 13 15 20; 11: 5 7 12 14 15 16 17 18 19 20; 12: 6 10 11 13 15 16 17 18 19 20; 13: 7 9 10 12 14 16 17 18 19 20; 14: 7 9 11 13 15 16 17 18 19 20; 15: 8 10 11 12 14 16 17 18 19 20; 16: 8 11 12 13 14 15 17 18 19 20; 17: 8 11 12 13 14 15 16 18 19 20; 18: 9 11 12 13 14 15 16 17 19 20; 19: 9 11 12 13 14 15 16 17 18 20; 20: 10 11 12 13 14 15 16 17 18 19\}

 \item Order 22: \{0: 1 2 3 4 5 6 7 8 9 10;  1: 0 2 3 4 5 6 7 8 9 10;  2: 0 1 3 4 5 6 7 8 9 10;  3: 0 1 2 4 5 6 7 8 9 10;  4: 0 1 2 3 5 6 7 8 9 10;  5: 0 1 2 3 4 6 7 8 9 10;  6: 0 1 2 3 4 5 7 8 9 11;  7: 0 1 2 3 4 5 6 8 11 12;  8: 0 1 2 3 4 5 6 7 11 13;  9: 0 1 2 3 4 5 6 14 15 16; 10: 0 1 2 3 4 5 11 12 14 17; 11: 6 7 8 10 13 15 18 19 20 21; 12: 7 10 13 14 16 17 18 19 20 21; 13: 8 11 12 15 16 17 18 19 20 21; 14: 9 10 12 15 16 17 18 19 20 21; 15: 9 11 13 14 16 17 18 19 20 21; 16: 9 12 13 14 15 17 18 19 20 21; 17: 10 12 13 14 15 16 18 19 20 21; 18: 11 12 13 14 15 16 17 19 20 21; 19: 11 12 13 14 15 16 17 18 20 21; 20: 11 12 13 14 15 16 17 18 19 21; 21: 11 12 13 14 15 16 17 18 19 20\}

 \item Order 23: \{0: 1 2 3 4 5 6 7 8 9 10;  1: 0 2 3 4 5 6 7 8 9 10;  2: 0 1 3 4 5 6 7 8 9 10;  3: 0 1 2 4 5 6 7 8 9 10;  4: 0 1 2 3 5 6 7 8 9 10;  5: 0 1 2 3 4 6 7 11 12 13;  6: 0 1 2 3 4 5 8 11 12 14;  7: 0 1 2 3 4 5 11 15 16 17;  8: 0 1 2 3 4 6 13 18 19 20;  9: 0 1 2 3 4 10 11 12 13 14; 10: 0 1 2 3 4 9 11 12 13 15; 11: 5 6 7 9 10 15 16 18 19 21; 12: 5 6 9 10 13 16 17 18 19 22; 13: 5 8 9 10 12 14 18 20 21 22; 14: 6 9 13 15 16 17 19 20 21 22; 15: 7 10 11 14 17 18 19 20 21 22; 16: 7 11 12 14 17 18 19 20 21 22; 17: 7 12 14 15 16 18 19 20 21 22; 18: 8 11 12 13 15 16 17 20 21 22; 19: 8 11 12 14 15 16 17 20 21 22; 20: 8 13 14 15 16 17 18 19 21 22; 21: 11 13 14 15 16 17 18 19 20 22; 22: 12 13 14 15 16 17 18 19 20 21\}

 \item Order 24: \{0: 1 2 3 4 5 6 7 8 9 10;  1: 0 2 3 4 5 6 7 8 9 10;  2: 0 1 3 4 5 6 7 8 9 10;  3: 0 1 2 4 5 6 7 8 9 10;  4: 0 1 2 3 5 6 7 8 9 10;  5: 0 1 2 3 4 6 7 11 12 13;  6: 0 1 2 3 4 5 8 11 12 14;  7: 0 1 2 3 4 5 11 15 16 17;  8: 0 1 2 3 4 6 12 18 19 20;  9: 0 1 2 3 4 10 11 12 13 14; 10: 0 1 2 3 4 9 11 12 13 14; 11: 5 6 7 9 10 12 13 14 15 18; 12: 5 6 8 9 10 11 15 16 19 21; 13: 5 9 10 11 17 18 20 21 22 23; 14: 6 9 10 11 15 17 19 20 22 23; 15: 7 11 12 14 16 19 20 21 22 23; 16: 7 12 15 17 18 19 20 21 22 23; 17: 7 13 14 16 18 19 20 21 22 23; 18: 8 11 13 16 17 19 20 21 22 23; 19: 8 12 14 15 16 17 18 21 22 23; 20: 8 13 14 15 16 17 18 21 22 23; 21: 12 13 15 16 17 18 19 20 22 23; 22: 13 14 15 16 17 18 19 20 21 23; 23: 13 14 15 16 17 18 19 20 21 22\}

 \item Order 25: \{0: 1 2 3 4 5 6 7 8 9 10;  1: 0 2 3 4 5 6 7 8 9 10;  2: 0 1 3 4 5 6 7 8 9 10;  3: 0 1 2 4 5 6 7 8 9 10;  4: 0 1 2 3 5 6 7 8 9 10;  5: 0 1 2 3 4 6 7 11 12 13;  6: 0 1 2 3 4 5 8 11 12 14;  7: 0 1 2 3 4 5 11 15 16 17;  8: 0 1 2 3 4 6 12 15 16 18;  9: 0 1 2 3 4 10 11 12 13 14; 10: 0 1 2 3 4 9 11 12 13 14; 11: 5 6 7 9 10 12 13 14 15 16; 12: 5 6 8 9 10 11 13 15 17 19; 13: 5 9 10 11 12 17 19 20 21 22; 14: 6 9 10 11 19 20 21 22 23 24; 15: 7 8 11 12 17 18 20 21 23 24; 16: 7 8 11 17 18 19 20 22 23 24; 17: 7 12 13 15 16 18 21 22 23 24; 18: 8 15 16 17 19 20 21 22 23 24; 19: 12 13 14 16 18 20 21 22 23 24; 20: 13 14 15 16 18 19 21 22 23 24; 21: 13 14 15 17 18 19 20 22 23 24; 22: 13 14 16 17 18 19 20 21 23 24; 23: 14 15 16 17 18 19 20 21 22 24; 24: 14 15 16 17 18 19 20 21 22 23\}

 \item Order 26: \{0: 1 2 3 4 5 6 7 8 9 10;  1: 0 2 3 4 5 6 7 8 9 10;  2: 0 1 3 4 5 6 7 8 9 10;  3: 0 1 2 4 5 6 7 8 9 10;  4: 0 1 2 3 5 6 7 8 9 10;  5: 0 1 2 3 4 6 7 11 12 13;  6: 0 1 2 3 4 5 8 11 12 14;  7: 0 1 2 3 4 5 11 15 16 17;  8: 0 1 2 3 4 6 12 15 16 18;  9: 0 1 2 3 4 10 11 12 13 14; 10: 0 1 2 3 4 9 11 12 13 14; 11: 5 6 7 9 10 12 13 14 15 16; 12: 5 6 8 9 10 11 13 14 15 18; 13: 5 9 10 11 12 15 16 19 20 21; 14: 6 9 10 11 12 17 19 20 21 22; 15: 7 8 11 12 13 16 22 23 24 25; 16: 7 8 11 13 15 17 19 23 24 25; 17: 7 14 16 18 20 21 22 23 24 25; 18: 8 12 17 19 20 21 22 23 24 25; 19: 13 14 16 18 20 21 22 23 24 25; 20: 13 14 17 18 19 21 22 23 24 25; 21: 13 14 17 18 19 20 22 23 24 25; 22: 14 15 17 18 19 20 21 23 24 25; 23: 15 16 17 18 19 20 21 22 24 25; 24: 15 16 17 18 19 20 21 22 23 25; 25: 15 16 17 18 19 20 21 22 23 24\}

 \item Order 27: \{0: 1 2 3 4 5 6 7 8 9 10;  1: 0 2 3 4 5 6 7 8 9 10;  2: 0 1 3 4 5 6 7 8 9 10;  3: 0 1 2 4 5 6 7 8 9 10;  4: 0 1 2 3 5 6 7 8 9 10;  5: 0 1 2 3 4 6 7 11 12 13;  6: 0 1 2 3 4 5 8 11 12 14;  7: 0 1 2 3 4 5 11 15 16 17;  8: 0 1 2 3 4 6 12 15 16 18;  9: 0 1 2 3 4 10 11 12 13 14; 10: 0 1 2 3 4 9 11 12 13 14; 11: 5 6 7 9 10 12 13 14 15 16; 12: 5 6 8 9 10 11 13 14 15 16; 13: 5 9 10 11 12 14 15 16 19 20; 14: 6 9 10 11 12 13 15 21 22 23; 15: 7 8 11 12 13 14 19 20 21 24; 16: 7 8 11 12 13 17 18 24 25 26; 17: 7 16 18 19 20 21 22 23 25 26; 18: 8 16 17 19 21 22 23 24 25 26; 19: 13 15 17 18 20 22 23 24 25 26; 20: 13 15 17 19 21 22 23 24 25 26; 21: 14 15 17 18 20 22 23 24 25 26; 22: 14 17 18 19 20 21 23 24 25 26; 23: 14 17 18 19 20 21 22 24 25 26; 24: 15 16 18 19 20 21 22 23 25 26; 25: 16 17 18 19 20 21 22 23 24 26; 26: 16 17 18 19 20 21 22 23 24 25\}

 \item Order 28: \{0: 1 2 3 4 5 6 7 8 9 10;  1: 0 2 3 4 5 6 7 8 9 10;  2: 0 1 3 4 5 6 7 8 9 10;  3: 0 1 2 4 5 6 7 8 9 10;  4: 0 1 2 3 5 6 7 8 9 10;  5: 0 1 2 3 4 6 7 8 9 10;  6: 0 1 2 3 4 5 7 8 11 12;  7: 0 1 2 3 4 5 6 11 12 13;  8: 0 1 2 3 4 5 6 11 13 14;  9: 0 1 2 3 4 5 11 14 15 16; 10: 0 1 2 3 4 5 12 14 17 18; 11: 6 7 8 9 12 13 14 15 16 17; 12: 6 7 10 11 13 14 15 16 17 18; 13: 7 8 11 12 14 15 16 17 18 19; 14: 8 9 10 11 12 13 15 16 17 19; 15: 9 11 12 13 14 17 18 20 21 22; 16: 9 11 12 13 14 20 21 23 24 25; 17: 10 11 12 13 14 15 22 23 26 27; 18: 10 12 13 15 20 22 24 25 26 27; 19: 13 14 20 21 22 23 24 25 26 27; 20: 15 16 18 19 21 23 24 25 26 27; 21: 15 16 19 20 22 23 24 25 26 27; 22: 15 17 18 19 21 23 24 25 26 27; 23: 16 17 19 20 21 22 24 25 26 27; 24: 16 18 19 20 21 22 23 25 26 27; 25: 16 18 19 20 21 22 23 24 26 27; 26: 17 18 19 20 21 22 23 24 25 27; 27: 17 18 19 20 21 22 23 24 25 26\}

 \item Order 29: \{0: 1 2 3 4 5 6 7 8 9 10;  1: 0 2 3 4 5 6 7 8 9 10;  2: 0 1 3 4 5 6 7 8 9 10;  3: 0 1 2 4 5 6 7 8 9 10;  4: 0 1 2 3 5 6 7 8 9 10;  5: 0 1 2 3 4 6 7 8 9 10;  6: 0 1 2 3 4 5 7 8 11 12;  7: 0 1 2 3 4 5 6 11 12 13;  8: 0 1 2 3 4 5 6 11 13 14;  9: 0 1 2 3 4 5 11 14 15 16; 10: 0 1 2 3 4 5 12 14 17 18; 11: 6 7 8 9 12 13 14 15 16 17; 12: 6 7 10 11 13 14 15 16 17 18; 13: 7 8 11 12 14 15 16 17 18 19; 14: 8 9 10 11 12 13 15 16 17 18; 15: 9 11 12 13 14 16 17 19 20 21; 16: 9 11 12 13 14 15 22 23 24 25; 17: 10 11 12 13 14 15 20 22 26 27; 18: 10 12 13 14 20 23 24 26 27 28; 19: 13 15 20 21 22 23 25 26 27 28; 20: 15 17 18 19 21 22 23 24 25 28; 21: 15 19 20 22 23 24 25 26 27 28; 22: 16 17 19 20 21 24 25 26 27 28; 23: 16 18 19 20 21 24 25 26 27 28; 24: 16 18 20 21 22 23 25 26 27 28; 25: 16 19 20 21 22 23 24 26 27 28; 26: 17 18 19 21 22 23 24 25 27 28; 27: 17 18 19 21 22 23 24 25 26 28; 28: 18 19 20 21 22 23 24 25 26 27\}

 \item Order 30: \{0: 1 2 3 4 5 6 7 8 9 10;  1: 0 2 3 4 5 6 7 8 9 10;  2: 0 1 3 4 5 6 7 8 9 10;  3: 0 1 2 4 5 6 7 8 9 10;  4: 0 1 2 3 5 6 7 8 9 10;  5: 0 1 2 3 4 6 7 8 9 10;  6: 0 1 2 3 4 5 7 8 11 12;  7: 0 1 2 3 4 5 6 11 12 13;  8: 0 1 2 3 4 5 6 11 13 14;  9: 0 1 2 3 4 5 11 14 15 16; 10: 0 1 2 3 4 5 12 14 17 18; 11: 6 7 8 9 12 13 14 15 16 17; 12: 6 7 10 11 13 14 15 16 17 18; 13: 7 8 11 12 14 15 16 17 18 19; 14: 8 9 10 11 12 13 15 16 17 18; 15: 9 11 12 13 14 16 17 18 19 20; 16: 9 11 12 13 14 15 17 18 19 21; 17: 10 11 12 13 14 15 16 19 20 22; 18: 10 12 13 14 15 16 21 22 23 24; 19: 13 15 16 17 23 25 26 27 28 29; 20: 15 17 21 23 24 25 26 27 28 29; 21: 16 18 20 22 24 25 26 27 28 29; 22: 17 18 21 23 24 25 26 27 28 29; 23: 18 19 20 22 24 25 26 27 28 29; 24: 18 20 21 22 23 25 26 27 28 29; 25: 19 20 21 22 23 24 26 27 28 29; 26: 19 20 21 22 23 24 25 27 28 29; 27: 19 20 21 22 23 24 25 26 28 29; 28: 19 20 21 22 23 24 25 26 27 29; 29: 19 20 21 22 23 24 25 26 27 28\}

 \item Order 31: \{0: 1 2 3 4 5 6 7 8 9 10;  1: 0 2 3 4 5 6 7 8 9 10;  2: 0 1 3 4 5 6 7 8 9 10;  3: 0 1 2 4 5 6 7 8 9 10;  4: 0 1 2 3 5 6 7 8 9 10;  5: 0 1 2 3 4 6 7 8 9 10;  6: 0 1 2 3 4 5 7 8 11 12;  7: 0 1 2 3 4 5 6 11 12 13;  8: 0 1 2 3 4 5 6 11 13 14;  9: 0 1 2 3 4 5 11 14 15 16; 10: 0 1 2 3 4 5 12 14 17 18; 11: 6 7 8 9 12 13 14 15 16 17; 12: 6 7 10 11 13 14 15 16 17 18; 13: 7 8 11 12 14 15 16 17 18 19; 14: 8 9 10 11 12 13 15 16 17 18; 15: 9 11 12 13 14 16 17 18 19 20; 16: 9 11 12 13 14 15 17 18 19 20; 17: 10 11 12 13 14 15 16 19 21 22; 18: 10 12 13 14 15 16 21 23 24 25; 19: 13 15 16 17 20 23 26 27 28 29; 20: 15 16 19 22 23 24 25 26 27 30; 21: 17 18 22 23 24 25 26 28 29 30; 22: 17 20 21 23 24 26 27 28 29 30; 23: 18 19 20 21 22 25 27 28 29 30; 24: 18 20 21 22 25 26 27 28 29 30; 25: 18 20 21 23 24 26 27 28 29 30; 26: 19 20 21 22 24 25 27 28 29 30; 27: 19 20 22 23 24 25 26 28 29 30; 28: 19 21 22 23 24 25 26 27 29 30; 29: 19 21 22 23 24 25 26 27 28 30; 30: 20 21 22 23 24 25 26 27 28 29\}

 \item Order 32: \{0: 1 2 3 4 5 6 7 8 9 10;  1: 0 2 3 4 5 6 7 8 9 10;  2: 0 1 3 4 5 6 7 8 9 10;  3: 0 1 2 4 5 6 7 8 9 10;  4: 0 1 2 3 5 6 7 8 9 10;  5: 0 1 2 3 4 6 7 8 9 10;  6: 0 1 2 3 4 5 7 8 11 12;  7: 0 1 2 3 4 5 6 11 12 13;  8: 0 1 2 3 4 5 6 11 13 14;  9: 0 1 2 3 4 5 11 14 15 16; 10: 0 1 2 3 4 5 12 14 17 18; 11: 6 7 8 9 12 13 14 15 16 17; 12: 6 7 10 11 13 14 15 16 17 18; 13: 7 8 11 12 14 15 16 17 18 19; 14: 8 9 10 11 12 13 15 16 17 18; 15: 9 11 12 13 14 16 17 18 19 20; 16: 9 11 12 13 14 15 17 18 19 20; 17: 10 11 12 13 14 15 16 18 19 20; 18: 10 12 13 14 15 16 17 19 21 22; 19: 13 15 16 17 18 20 21 23 24 25; 20: 15 16 17 19 21 22 23 26 27 28; 21: 18 19 20 23 26 27 28 29 30 31; 22: 18 20 23 24 25 26 27 29 30 31; 23: 19 20 21 22 24 25 28 29 30 31; 24: 19 22 23 25 26 27 28 29 30 31; 25: 19 22 23 24 26 27 28 29 30 31; 26: 20 21 22 24 25 27 28 29 30 31; 27: 20 21 22 24 25 26 28 29 30 31; 28: 20 21 23 24 25 26 27 29 30 31; 29: 21 22 23 24 25 26 27 28 30 31; 30: 21 22 23 24 25 26 27 28 29 31; 31: 21 22 23 24 25 26 27 28 29 30\}

 \item Order 33: \{0: 1 2 3 4 5 6 7 8 9 10;  1: 0 2 3 4 5 6 7 8 9 10;  2: 0 1 3 4 5 6 7 8 9 10;  3: 0 1 2 4 5 6 7 8 9 10;  4: 0 1 2 3 5 6 7 8 9 10;  5: 0 1 2 3 4 6 7 8 9 10;  6: 0 1 2 3 4 5 7 8 11 12;  7: 0 1 2 3 4 5 6 11 12 13;  8: 0 1 2 3 4 5 6 11 13 14;  9: 0 1 2 3 4 5 11 14 15 16; 10: 0 1 2 3 4 5 12 14 17 18; 11: 6 7 8 9 12 13 14 15 16 17; 12: 6 7 10 11 13 14 15 16 17 18; 13: 7 8 11 12 14 15 16 17 18 19; 14: 8 9 10 11 12 13 15 16 17 18; 15: 9 11 12 13 14 16 17 18 19 20; 16: 9 11 12 13 14 15 17 18 19 20; 17: 10 11 12 13 14 15 16 18 19 20; 18: 10 12 13 14 15 16 17 19 21 22; 19: 13 15 16 17 18 23 24 25 26 27; 20: 15 16 17 21 23 24 28 29 30 31; 21: 18 20 23 25 26 28 29 30 31 32; 22: 18 23 24 25 26 27 28 29 30 32; 23: 19 20 21 22 24 25 27 28 29 32; 24: 19 20 22 23 26 27 28 29 30 31; 25: 19 21 22 23 26 27 28 30 31 32; 26: 19 21 22 24 25 27 29 30 31 32; 27: 19 22 23 24 25 26 29 30 31 32; 28: 20 21 22 23 24 25 29 30 31 32; 29: 20 21 22 23 24 26 27 28 31 32; 30: 20 21 22 24 25 26 27 28 31 32; 31: 20 21 24 25 26 27 28 29 30 32; 32: 21 22 23 25 26 27 28 29 30 31\}

 \item Order 34: \{0: 1 2 3 4 5 6 7 8 9 10;  1: 0 2 3 4 5 6 7 8 9 10;  2: 0 1 3 4 5 6 7 8 9 10;  3: 0 1 2 4 5 6 7 8 9 10;  4: 0 1 2 3 5 6 7 8 9 10;  5: 0 1 2 3 4 6 7 8 9 10;  6: 0 1 2 3 4 5 7 8 11 12;  7: 0 1 2 3 4 5 6 11 12 13;  8: 0 1 2 3 4 5 6 11 13 14;  9: 0 1 2 3 4 5 11 14 15 16; 10: 0 1 2 3 4 5 12 14 17 18; 11: 6 7 8 9 12 13 14 15 16 17; 12: 6 7 10 11 13 14 15 16 17 18; 13: 7 8 11 12 14 15 16 17 18 19; 14: 8 9 10 11 12 13 15 16 17 18; 15: 9 11 12 13 14 16 17 18 19 20; 16: 9 11 12 13 14 15 17 18 19 20; 17: 10 11 12 13 14 15 16 18 19 20; 18: 10 12 13 14 15 16 17 19 20 21; 19: 13 15 16 17 18 20 21 22 23 24; 20: 15 16 17 18 19 21 22 25 26 27; 21: 18 19 20 22 23 24 25 28 29 30; 22: 19 20 21 23 25 26 27 28 31 32; 23: 19 21 22 25 26 27 29 31 32 33; 24: 19 21 25 26 28 29 30 31 32 33; 25: 20 21 22 23 24 26 28 29 30 33; 26: 20 22 23 24 25 27 30 31 32 33; 27: 20 22 23 26 28 29 30 31 32 33; 28: 21 22 24 25 27 29 30 31 32 33; 29: 21 23 24 25 27 28 30 31 32 33; 30: 21 24 25 26 27 28 29 31 32 33; 31: 22 23 24 26 27 28 29 30 32 33; 32: 22 23 24 26 27 28 29 30 31 33; 33: 23 24 25 26 27 28 29 30 31 32\}

\end{itemize}

\subsubsection*{Adjacency lists of 11-regular nut graphs}

\begin{itemize}
\addtolength{\itemsep}{-2mm}
\footnotesize

 \item Order 16: \{0: 1 2 3 4 5 6 7 8 9 10 11;  1: 0 2 3 4 5 6 7 8 9 10 11;  2: 0 1 3 4 5 6 7 8 9 10 12;  3: 0 1 2 4 5 6 7 11 12 13 14;  4: 0 1 2 3 5 6 8 11 13 14 15;  5: 0 1 2 3 4 9 10 11 12 13 15;  6: 0 1 2 3 4 9 11 12 13 14 15;  7: 0 1 2 3 8 9 10 11 12 14 15;  8: 0 1 2 4 7 9 10 12 13 14 15;  9: 0 1 2 5 6 7 8 12 13 14 15; 10: 0 1 2 5 7 8 11 12 13 14 15; 11: 0 1 3 4 5 6 7 10 13 14 15; 12: 2 3 5 6 7 8 9 10 13 14 15; 13: 3 4 5 6 8 9 10 11 12 14 15; 14: 3 4 6 7 8 9 10 11 12 13 15; 15: 4 5 6 7 8 9 10 11 12 13 14\}

 \item Order 18: \{0: 1 2 3 4 5 6 7 8 9 10 11;  1: 0 2 3 4 5 6 7 8 9 10 11;  2: 0 1 3 4 5 6 7 8 9 10 11;  3: 0 1 2 4 5 6 7 8 9 10 11;  4: 0 1 2 3 5 6 7 8 12 13 14;  5: 0 1 2 3 4 6 7 9 12 13 15;  6: 0 1 2 3 4 5 10 12 14 16 17;  7: 0 1 2 3 4 5 11 14 15 16 17;  8: 0 1 2 3 4 12 13 14 15 16 17;  9: 0 1 2 3 5 12 13 14 15 16 17; 10: 0 1 2 3 6 11 12 13 15 16 17; 11: 0 1 2 3 7 10 13 14 15 16 17; 12: 4 5 6 8 9 10 13 14 15 16 17; 13: 4 5 8 9 10 11 12 14 15 16 17; 14: 4 6 7 8 9 11 12 13 15 16 17; 15: 5 7 8 9 10 11 12 13 14 16 17; 16: 6 7 8 9 10 11 12 13 14 15 17; 17: 6 7 8 9 10 11 12 13 14 15 16\}

 \item Order 20: \{0: 1 2 3 4 5 6 7 8 9 10 11;  1: 0 2 3 4 5 6 7 8 9 10 11;  2: 0 1 3 4 5 6 7 8 9 10 11;  3: 0 1 2 4 5 6 7 8 9 10 11;  4: 0 1 2 3 5 6 7 8 9 10 11;  5: 0 1 2 3 4 6 7 8 12 13 14;  6: 0 1 2 3 4 5 12 13 15 16 17;  7: 0 1 2 3 4 5 12 14 15 16 18;  8: 0 1 2 3 4 5 12 15 17 18 19;  9: 0 1 2 3 4 12 13 14 17 18 19; 10: 0 1 2 3 4 13 14 15 16 17 19; 11: 0 1 2 3 4 13 14 15 16 18 19; 12: 5 6 7 8 9 13 15 16 17 18 19; 13: 5 6 9 10 11 12 14 16 17 18 19; 14: 5 7 9 10 11 13 15 16 17 18 19; 15: 6 7 8 10 11 12 14 16 17 18 19; 16: 6 7 10 11 12 13 14 15 17 18 19; 17: 6 8 9 10 12 13 14 15 16 18 19; 18: 7 8 9 11 12 13 14 15 16 17 19; 19: 8 9 10 11 12 13 14 15 16 17 18\}

 \item Order 22: \{0: 1 2 3 4 5 6 7 8 9 10 11;  1: 0 2 3 4 5 6 7 8 9 10 11;  2: 0 1 3 4 5 6 7 8 9 10 11;  3: 0 1 2 4 5 6 7 8 9 10 11;  4: 0 1 2 3 5 6 7 8 9 10 12;  5: 0 1 2 3 4 6 7 8 9 12 13;  6: 0 1 2 3 4 5 11 12 14 15 16;  7: 0 1 2 3 4 5 11 13 14 15 17;  8: 0 1 2 3 4 5 12 13 14 17 18;  9: 0 1 2 3 4 5 12 13 17 19 20; 10: 0 1 2 3 4 11 14 17 18 19 21; 11: 0 1 2 3 6 7 10 15 16 20 21; 12: 4 5 6 8 9 15 16 18 19 20 21; 13: 5 7 8 9 14 15 16 18 19 20 21; 14: 6 7 8 10 13 16 17 18 19 20 21; 15: 6 7 11 12 13 16 17 18 19 20 21; 16: 6 11 12 13 14 15 17 18 19 20 21; 17: 7 8 9 10 14 15 16 18 19 20 21; 18: 8 10 12 13 14 15 16 17 19 20 21; 19: 9 10 12 13 14 15 16 17 18 20 21; 20: 9 11 12 13 14 15 16 17 18 19 21; 21: 10 11 12 13 14 15 16 17 18 19 20\}

 \item Order 24: \{0: 1 2 3 4 5 6 7 8 9 10 11;  1: 0 2 3 4 5 6 7 8 9 10 11;  2: 0 1 3 4 5 6 7 8 9 10 11;  3: 0 1 2 4 5 6 7 8 9 10 11;  4: 0 1 2 3 5 6 7 8 9 10 11;  5: 0 1 2 3 4 6 7 8 9 10 12;  6: 0 1 2 3 4 5 7 8 9 13 14;  7: 0 1 2 3 4 5 6 10 15 16 17;  8: 0 1 2 3 4 5 6 10 15 16 18;  9: 0 1 2 3 4 5 6 10 15 19 20; 10: 0 1 2 3 4 5 7 8 9 13 21; 11: 0 1 2 3 4 13 14 17 18 22 23; 12: 5 13 14 15 16 17 19 20 21 22 23; 13: 6 10 11 12 17 18 19 20 21 22 23; 14: 6 11 12 15 16 17 18 19 20 21 22; 15: 7 8 9 12 14 16 17 19 20 22 23; 16: 7 8 12 14 15 17 18 19 21 22 23; 17: 7 11 12 13 14 15 16 18 20 21 23; 18: 8 11 13 14 16 17 19 20 21 22 23; 19: 9 12 13 14 15 16 18 20 21 22 23; 20: 9 12 13 14 15 17 18 19 21 22 23; 21: 10 12 13 14 16 17 18 19 20 22 23; 22: 11 12 13 14 15 16 18 19 20 21 23; 23: 11 12 13 15 16 17 18 19 20 21 22\}

 \item Order 26: \{0: 1 2 3 4 5 6 7 8 9 10 11;  1: 0 2 3 4 5 6 7 8 9 10 11;  2: 0 1 3 4 5 6 7 8 9 10 11;  3: 0 1 2 4 5 6 7 8 9 10 11;  4: 0 1 2 3 5 6 7 8 9 10 11;  5: 0 1 2 3 4 6 7 8 9 12 13;  6: 0 1 2 3 4 5 10 11 14 15 16;  7: 0 1 2 3 4 5 10 14 15 16 17;  8: 0 1 2 3 4 5 10 14 15 18 19;  9: 0 1 2 3 4 5 18 19 20 21 22; 10: 0 1 2 3 4 6 7 8 12 13 14; 11: 0 1 2 3 4 6 12 13 14 15 16; 12: 5 10 11 13 14 15 16 17 18 19 20; 13: 5 10 11 12 14 15 18 21 22 23 24; 14: 6 7 8 10 11 12 13 19 20 23 25; 15: 6 7 8 11 12 13 17 21 23 24 25; 16: 6 7 11 12 17 19 20 22 23 24 25; 17: 7 12 15 16 18 20 21 22 23 24 25; 18: 8 9 12 13 17 19 20 21 22 24 25; 19: 8 9 12 14 16 18 21 22 23 24 25; 20: 9 12 14 16 17 18 21 22 23 24 25; 21: 9 13 15 17 18 19 20 22 23 24 25; 22: 9 13 16 17 18 19 20 21 23 24 25; 23: 13 14 15 16 17 19 20 21 22 24 25; 24: 13 15 16 17 18 19 20 21 22 23 25; 25: 14 15 16 17 18 19 20 21 22 23 24\}

 \item Order 28: \{0: 1 2 3 4 5 6 7 8 9 10 11;  1: 0 2 3 4 5 6 7 8 9 10 11;  2: 0 1 3 4 5 6 7 8 9 10 11;  3: 0 1 2 4 5 6 7 8 9 10 11;  4: 0 1 2 3 5 6 7 8 9 10 11;  5: 0 1 2 3 4 6 7 8 9 12 13;  6: 0 1 2 3 4 5 10 11 14 15 16;  7: 0 1 2 3 4 5 10 12 17 18 19;  8: 0 1 2 3 4 5 10 14 15 20 21;  9: 0 1 2 3 4 5 11 14 15 16 20; 10: 0 1 2 3 4 6 7 8 12 13 14; 11: 0 1 2 3 4 6 9 12 13 14 15; 12: 5 7 10 11 13 14 15 16 17 18 19; 13: 5 10 11 12 14 15 16 17 18 19 20; 14: 6 8 9 10 11 12 13 16 17 22 23; 15: 6 8 9 11 12 13 18 22 24 25 26; 16: 6 9 12 13 14 19 22 23 24 25 27; 17: 7 12 13 14 21 22 23 24 25 26 27; 18: 7 12 13 15 20 21 22 24 25 26 27; 19: 7 12 13 16 21 22 23 24 25 26 27; 20: 8 9 13 18 21 22 23 24 25 26 27; 21: 8 17 18 19 20 22 23 24 25 26 27; 22: 14 15 16 17 18 19 20 21 23 26 27; 23: 14 16 17 19 20 21 22 24 25 26 27; 24: 15 16 17 18 19 20 21 23 25 26 27; 25: 15 16 17 18 19 20 21 23 24 26 27; 26: 15 17 18 19 20 21 22 23 24 25 27; 27: 16 17 18 19 20 21 22 23 24 25 26\}

 \item Order 30: \{0: 1 2 3 4 5 6 7 8 9 10 11;  1: 0 2 3 4 5 6 7 8 9 10 11;  2: 0 1 3 4 5 6 7 8 9 10 11;  3: 0 1 2 4 5 6 7 8 9 10 11;  4: 0 1 2 3 5 6 7 8 9 10 11;  5: 0 1 2 3 4 6 7 8 9 12 13;  6: 0 1 2 3 4 5 10 11 14 15 16;  7: 0 1 2 3 4 5 10 12 17 18 19;  8: 0 1 2 3 4 5 10 14 15 16 20;  9: 0 1 2 3 4 5 13 21 22 23 24; 10: 0 1 2 3 4 6 7 8 12 13 14; 11: 0 1 2 3 4 6 12 13 14 15 16; 12: 5 7 10 11 13 14 15 16 17 18 19; 13: 5 9 10 11 12 14 15 16 17 18 19; 14: 6 8 10 11 12 13 15 16 17 18 19; 15: 6 8 11 12 13 14 16 17 18 19 20; 16: 6 8 11 12 13 14 15 17 21 22 25; 17: 7 12 13 14 15 16 18 20 23 25 26; 18: 7 12 13 14 15 17 21 25 27 28 29; 19: 7 12 13 14 15 23 24 26 27 28 29; 20: 8 15 17 22 23 24 25 26 27 28 29; 21: 9 16 18 22 23 24 25 26 27 28 29; 22: 9 16 20 21 23 24 25 26 27 28 29; 23: 9 17 19 20 21 22 24 26 27 28 29; 24: 9 19 20 21 22 23 25 26 27 28 29; 25: 16 17 18 20 21 22 24 26 27 28 29; 26: 17 19 20 21 22 23 24 25 27 28 29; 27: 18 19 20 21 22 23 24 25 26 28 29; 28: 18 19 20 21 22 23 24 25 26 27 29; 29: 18 19 20 21 22 23 24 25 26 27 28\}

 \item Order 32: \{0: 1 2 3 4 5 6 7 8 9 10 11;  1: 0 2 3 4 5 6 7 8 9 10 11;  2: 0 1 3 4 5 6 7 8 9 10 11;  3: 0 1 2 4 5 6 7 8 9 10 11;  4: 0 1 2 3 5 6 7 8 9 10 11;  5: 0 1 2 3 4 6 7 8 9 12 13;  6: 0 1 2 3 4 5 10 11 14 15 16;  7: 0 1 2 3 4 5 10 12 17 18 19;  8: 0 1 2 3 4 5 10 14 15 16 20;  9: 0 1 2 3 4 5 12 17 18 21 22; 10: 0 1 2 3 4 6 7 8 12 13 14; 11: 0 1 2 3 4 6 12 13 14 15 16; 12: 5 7 9 10 11 13 14 15 16 17 18; 13: 5 10 11 12 14 15 16 17 18 19 20; 14: 6 8 10 11 12 13 15 16 17 18 19; 15: 6 8 11 12 13 14 16 17 18 19 20; 16: 6 8 11 12 13 14 15 17 18 19 20; 17: 7 9 12 13 14 15 16 18 19 20 23; 18: 7 9 12 13 14 15 16 17 21 24 25; 19: 7 13 14 15 16 17 23 24 25 26 27; 20: 8 13 15 16 17 22 23 28 29 30 31; 21: 9 18 23 24 25 26 27 28 29 30 31; 22: 9 20 23 24 25 26 27 28 29 30 31; 23: 17 19 20 21 22 26 27 28 29 30 31; 24: 18 19 21 22 25 26 27 28 29 30 31; 25: 18 19 21 22 24 26 27 28 29 30 31; 26: 19 21 22 23 24 25 27 28 29 30 31; 27: 19 21 22 23 24 25 26 28 29 30 31; 28: 20 21 22 23 24 25 26 27 29 30 31; 29: 20 21 22 23 24 25 26 27 28 30 31; 30: 20 21 22 23 24 25 26 27 28 29 31; 31: 20 21 22 23 24 25 26 27 28 29 30\}

 \item Order 34: \{0: 1 2 3 4 5 6 7 8 9 10 11;  1: 0 2 3 4 5 6 7 8 9 10 11;  2: 0 1 3 4 5 6 7 8 9 10 11;  3: 0 1 2 4 5 6 7 8 9 10 11;  4: 0 1 2 3 5 6 7 8 9 10 11;  5: 0 1 2 3 4 6 7 8 9 12 13;  6: 0 1 2 3 4 5 10 11 14 15 16;  7: 0 1 2 3 4 5 10 12 17 18 19;  8: 0 1 2 3 4 5 10 14 15 16 20;  9: 0 1 2 3 4 5 12 17 18 20 21; 10: 0 1 2 3 4 6 7 8 12 13 14; 11: 0 1 2 3 4 6 12 13 14 15 16; 12: 5 7 9 10 11 13 14 15 16 17 18; 13: 5 10 11 12 14 15 16 17 18 19 20; 14: 6 8 10 11 12 13 15 16 17 18 19; 15: 6 8 11 12 13 14 16 17 18 19 20; 16: 6 8 11 12 13 14 15 17 18 19 20; 17: 7 9 12 13 14 15 16 18 19 20 21; 18: 7 9 12 13 14 15 16 17 19 20 22; 19: 7 13 14 15 16 17 18 20 21 23 24; 20: 8 9 13 15 16 17 18 19 25 26 27; 21: 9 17 19 22 23 24 25 28 29 30 31; 22: 18 21 23 24 25 26 27 28 29 32 33; 23: 19 21 22 25 26 27 28 30 31 32 33; 24: 19 21 22 25 26 28 29 30 31 32 33; 25: 20 21 22 23 24 27 29 30 31 32 33; 26: 20 22 23 24 27 28 29 30 31 32 33; 27: 20 22 23 25 26 28 29 30 31 32 33; 28: 21 22 23 24 26 27 29 30 31 32 33; 29: 21 22 24 25 26 27 28 30 31 32 33; 30: 21 23 24 25 26 27 28 29 31 32 33; 31: 21 23 24 25 26 27 28 29 30 32 33; 32: 22 23 24 25 26 27 28 29 30 31 33; 33: 22 23 24 25 26 27 28 29 30 31 32\}

 \item Order 36: \{0: 1 2 3 4 5 6 7 8 9 10 11;  1: 0 2 3 4 5 6 7 8 9 10 11;  2: 0 1 3 4 5 6 7 8 9 10 11;  3: 0 1 2 4 5 6 7 8 9 10 11;  4: 0 1 2 3 5 6 7 8 9 10 11;  5: 0 1 2 3 4 6 7 8 9 12 13;  6: 0 1 2 3 4 5 10 11 14 15 16;  7: 0 1 2 3 4 5 10 12 17 18 19;  8: 0 1 2 3 4 5 10 14 15 16 20;  9: 0 1 2 3 4 5 12 17 18 20 21; 10: 0 1 2 3 4 6 7 8 12 13 14; 11: 0 1 2 3 4 6 12 13 14 15 16; 12: 5 7 9 10 11 13 14 15 16 17 18; 13: 5 10 11 12 14 15 16 17 18 19 20; 14: 6 8 10 11 12 13 15 16 17 18 19; 15: 6 8 11 12 13 14 16 17 18 19 20; 16: 6 8 11 12 13 14 15 17 18 19 20; 17: 7 9 12 13 14 15 16 18 19 20 21; 18: 7 9 12 13 14 15 16 17 19 20 21; 19: 7 13 14 15 16 17 18 20 21 22 23; 20: 8 9 13 15 16 17 18 19 21 22 24; 21: 9 17 18 19 20 22 23 24 25 26 27; 22: 19 20 21 23 24 25 26 27 28 29 30; 23: 19 21 22 25 28 29 31 32 33 34 35; 24: 20 21 22 26 28 29 31 32 33 34 35; 25: 21 22 23 27 28 29 30 31 32 33 34; 26: 21 22 24 27 28 29 30 31 32 33 35; 27: 21 22 25 26 28 30 31 32 33 34 35; 28: 22 23 24 25 26 27 29 30 31 34 35; 29: 22 23 24 25 26 28 30 32 33 34 35; 30: 22 25 26 27 28 29 31 32 33 34 35; 31: 23 24 25 26 27 28 30 32 33 34 35; 32: 23 24 25 26 27 29 30 31 33 34 35; 33: 23 24 25 26 27 29 30 31 32 34 35; 34: 23 24 25 27 28 29 30 31 32 33 35; 35: 23 24 26 27 28 29 30 31 32 33 34\}
\end{itemize}

\end{document}